\documentclass[10pt]{amsart}

\usepackage{amsmath,amssymb,amsthm,mathrsfs,graphicx,mathtools}
\usepackage[colorlinks=true,linkcolor=blue,citecolor=blue]{hyperref}
\newtheorem{theorem}{Theorem}[section]
\newtheorem{lemma}[theorem]{Lemma}
\newtheorem{proposition}[theorem]{Proposition}

\theoremstyle{definition}

\theoremstyle{remark}
\newtheorem{remark}[theorem]{Remark}

\newcommand{\R}{\mathbb{R}}
\newcommand{\Area}{\operatorname{Area}}
\newcommand{\T}{\mathbb{S}^1}
\newcommand{\C}{\mathbb C}
\newcommand{\Z}{\mathbb Z}

\numberwithin{equation}{section}

\subjclass[2020]{53C23, 53C60, 42A05}
\keywords{Filling area conjecture, Riemannian isometric filling, Fourier approach}

\hypersetup{
pdftitle={A Fourier approach to Gromov's Filling Area Conjecture},
pdfauthor={Le Chen, Xiaolong Li, Yimin Zhong},
pdfkeywords={filling area conjecture, isometric filling, Fourier calibration, comass, injective hull}
}

\title{A Fourier approach to Gromov's Filling Area Conjecture}

\author{Le Chen}
\address{Department of Mathematics and Statistics, Auburn University, Auburn, AL 36849}
\email{lzc0090@auburn.edu}
\thanks{Le Chen was partially supported by NSF CAREER grant DMS-2443823.}

\author[Xiaolong Li]{Xiaolong Li}
\address{Department of Mathematics and Statistics, Auburn University, Auburn, AL, 36849}
\email{xil0005@auburn.edu}
\thanks{Xiaolong Li was supported in part by NSF-DMS \#2553660 and a start-up grant at Auburn University.}

\author{Yimin Zhong}
\address{Department of Mathematics and Statistics, Auburn University, Auburn, AL 36849}
\email{yzz0225@auburn.edu}
\thanks{Yimin Zhong was partially supported by NSF grant DMS-2309530.}

\begin{document}

\begin{abstract}
We prove that every compact connected Riemannian isometric filling \(M\) of a circle of length \(2\pi\) satisfies \(\operatorname{Area}(M) \geq \frac{14\zeta(3)}{\pi} \approx 5.35677 \), regardless of orientability or topological types. Our new approach uses the odd Fourier coefficients of the distance functions from boundary points. For orientable fillings, we use a cubic resonant perturbation to obtain \(\operatorname{Area}(M)>5.40154\). 
\end{abstract}

\maketitle

\section{Introduction}
A compact, connected Riemannian surface $M$ with one boundary component $\partial M$ is called a \emph{Riemannian isometric filling} of $\T$, a circle of length \(2\pi\), if there exists an arclength parametrization $\gamma:\T\to\partial M$ such that
\begin{equation}\label{eq 1.1}
    d_M(\gamma(s),\gamma(t))=d_{\T}(s,t) \qquad \text{for all }s,t\in\T,
\end{equation}
where $d_M$ and $d_{\T}$ denote the intrinsic distances on $M$ and $\T$, respectively.
Gromov’s filling area conjecture \cite{Gromov83} asserts that every orientable Riemannian isometric filling of $\T$ satisfies
\[
\operatorname{Area}(M)\ge 2\pi.
\]
The conjectured bound is attained by the unit round hemisphere. Gromov proved the conjecture when \(M\) is homeomorphic to a disk \cite[pages 59-60]{Gromov83}, using Pu’s systolic inequality \cite{Pu52}. 
Bangert, Croke, Ivanov, and Katz subsequently established the conjecture for every orientable filling of genus one, as a consequence of their work on ovalless real hyperelliptic surfaces \cite{BCIK05}. Despite the partial results and related works in \cite{Ivanov11}, \cite{Cossarini20}, \cite{Ambi23}, \cite{Zust25} and  \cite{Chambers26}, the conjecture remains open in general for genus at least two. 

In this paper, we develop a Fourier approach to obtain lower bounds for the filling area that are uniform over all topological types. Our main theorem states that
\begin{theorem}\label{thm:main}
Every compact, connected Riemannian isometric filling \(M\) of a circle of length \(2\pi\), orientable or not, satisfies
\begin{equation}\label{eq 1.2}
\operatorname{Area}(M)\ge \frac{14\zeta(3)}{\pi}
\approx 5.35677,
\end{equation}
where \(\zeta\) denotes the Riemann zeta function.    
\end{theorem}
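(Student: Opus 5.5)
The plan is to turn the boundary-distance functions into a family of complex-valued maps $M\to\C$, one for each odd frequency. On the boundary each map traces a round circle an odd number of times, so a mod-$2$ degree argument bounds its Jacobian integral from below, whether or not $M$ is orientable. Summing over frequencies with weights, and bounding the weighted Jacobian sum pointwise by a constant via the eikonal property of distance functions, should produce the lower bound. I have checked the boundary Fourier computation; the choice of weights that actually produces $\zeta(3)$ is the unresolved step.

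\emph{Step 1 (Fourier maps).} For $s\in\T$ put $f_s(x)=d_M(x,\gamma(s))$. Each $f_s$ is $1$-Lipschitz with $|\nabla f_s|=1$ a.e. For odd $k\ge1$ define
\[
u_k(x)=\frac1\pi\int_{\T} f_s(x)\,e^{-iks}\,ds .
\]
By \eqref{eq 1.1}, on the boundary $f_s(\gamma(t))=d_{\T}(s,t)$ is the triangle wave
\[
\frac\pi2-\frac4\pi\sum_{j\ \mathrm{odd}}\frac{\cos(j(s-t))}{j^2}.
\]
Hence $u_k(\gamma(t))=-\frac{4}{\pi k^2}e^{-ikt}$, a circle of radius $r_k=4/(\pi k^2)$ traversed $k$ times. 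Only odd $k$ are used, and this matters: the winding number $k$ is then odd, so the boundary loop is nonzero in $H_1(\C\setminus\{p\};\Z/2)$ for every $|p|<r_k$. Consequently every such $p$ has an odd number of preimages (generic $p$, after smoothing, with $u_k$ Lipschitz), with no orientation needed. The area formula then gives
\[
\int_M |J u_k|\,dA\ \ge\ \pi r_k^2=\frac{16}{\pi k^4},
\]
where $J u_k$ denotes the Jacobian determinant of $u_k$ viewed as a map to $\R^2$.

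\emph{Step 2 (pointwise bound).} Write $V(s)=\nabla f_s(x)$, a unit vector for a.e.\ $(x,s)$ by Fubini. Then $\nabla u_k=\frac1\pi\int V(s)e^{-iks}ds$, and
\[
J u_k=\frac{1}{\pi^2}\iint \bigl(V(s)\times V(t)\bigr)\,\sin\bigl(k(s-t)\bigr)\,ds\,dt ,
\]
up to a sign convention. For weights $a_k\ge0$ we get
\[
\sum_{k\ \mathrm{odd}} a_k|J u_k|\ \le\ \frac1{\pi^2}\iint \bigl|K(s-t)\bigr|\,ds\,dt,\qquad K(\theta)=\sum_{k\ \mathrm{odd}} \pm a_k\sin(k\theta),
\]
where the signs are chosen pointwise to match $\operatorname{sign} J u_k$. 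To keep $K$ bounded independently of those signs I would first try the crude route: $|Ju_k|\le \tfrac12|\nabla u_k|^2$ together with Parseval for $V$, which gives $\sum_{k\ \mathrm{odd}}|Ju_k|\le 1$ pointwise. Integrating over $M$ and using Step 1 gives
\[
\operatorname{Area}(M)\ \ge\ \sum_{k\ \mathrm{odd}}\frac{16}{\pi k^4}\ \ge\ \frac{16}{\pi}.
\]
This is a genuine but weaker bound, of $\zeta(4)$ type.

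\emph{Step 3 (reaching $\zeta(3)$; main obstacle).} Note that $\sum_{k\ \mathrm{odd}}k^{-3}=\tfrac78\zeta(3)$, so the target $14\zeta(3)/\pi$ equals $\tfrac{16}{\pi}\sum_{k\ \mathrm{odd}}k^{-3}$. That is, it is exactly the Step 1 sum with $k^{-4}$ upgraded to $k^{-3}$. Reaching it needs pointwise control of the weighted sum $\sum_{k\ \mathrm{odd}} k\,|J u_k|$, or equivalently a better per-frequency lower bound. I expect this to be the crux. The first approach I would try is to exploit extra structure of $s\mapsto V(s)$: it is the direction to a moving boundary point, so its angle is monotone, or at least has controlled total variation, along arcs of the boundary. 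This extra structure should make the weighted kernel effectively the bounded square wave
\[
\sum_{k\ \mathrm{odd}}\frac{\sin(k\theta)}{k}=\frac\pi4\operatorname{sign}(\sin\theta),
\]
after one integration by parts in $s$. Integration by parts trades a factor $k$ for a derivative on $V$, and the pointwise bound becomes a universal constant. If the constants close up, integrating over $M$ gives $\operatorname{Area}(M)\ge \tfrac{16}{\pi}\cdot\tfrac78\zeta(3)=14\zeta(3)/\pi$. Justifying this integration by parts is where I expect the argument to be delicate, and it is the step I am least sure of.
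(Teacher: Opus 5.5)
Your Steps 1 and 2 are sound and use the same ingredients as the paper: the boundary computation, the $\Z_2$ coverage lemma, and the Bessel bound $\sum_k|Ju_k|\le1$. However, they only give $\operatorname{Area}(M)\ge\frac{16}{\pi}\sum_{k\ \mathrm{odd}}k^{-4}=\pi^3/6$. Step 3 is a genuine gap, and neither repair you suggest can work. The loss is that the $k$-fold circle of radius $r_k$ has Green area $k\pi r_k^2=16/(\pi k^3)$, whereas the mod $2$ argument only sees the enclosed area $\pi r_k^2$.

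\emph{Per-frequency route.} A better per-frequency bound does hold when $M$ is orientable: Stokes gives $\bigl|\int_M Ju_k\,dA\bigr|=k\pi r_k^2$, and your Step 2 then finishes. For nonorientable $M$ it is unavailable from topology. On a M\"obius band the boundary is homotopic to twice the core, so a Lipschitz map with boundary trace $re^{3is}$ can send the core once around the circle, absorb two of the three turns, and cover the disk only once.

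\emph{Weighted route.} The bound $\sum_k k|Ju_k|\le1$ you would need is far too strong. Together with the orientable Stokes identity it would give $\operatorname{Area}(M)\ge\frac{16}{\pi}\sum_{k\ \mathrm{odd}}k^{-2}=2\pi$, which is Gromov's conjecture for every orientable filling. Monotonicity of the angle would not help either. The field $V(s)=(\cos ms,\sin ms)$ with $m$ odd has monotone angle and unweighted sum $1$, but weighted sum $m$. Moreover, $s\mapsto\nabla f_s(x)$ can be discontinuous, for instance across parameters for which $\gamma(s)$ is joined to $x$ by two minimizing geodesics, so the integration by parts in $s$ is not available.

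The missing idea is to \emph{mix} frequencies instead of weighting them.
\begin{itemize}
\item For $U=(u_{jk})\in O(N)$ set $G_j=\sum_k u_{jk}F_{m_k}$. Bessel's inequality and orthogonality of $U$ keep $\sum_j J_2G_j\le1$ pointwise.
\item The boundary trace of $G_j$ is $-\sum_k u_{jk}\rho_{m_k}e^{im_ks}$. It is a Jordan curve as soon as $|u_{j1}|>\sum_{k\ge2}|u_{jk}|/m_k$. The paper arranges this in every row with $U_N=R_2\cdots R_N$, a product of rotations in the $(e_1,e_k)$-planes with $\tan\vartheta_k=2/m_k$.
\item For a Jordan curve, the enclosed area (which is all your $\Z_2$ argument controls) equals the Green area $\pi\sum_k m_k|u_{jk}|^2\rho_{m_k}^2$. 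So the factor $m_k$ is recovered.
\item Summing over $j$ and using that the columns of $U$ are unit vectors gives $\operatorname{Area}(M)\ge\pi\sum_k m_k\rho_{m_k}^2=\frac{16}{\pi}\sum_{k\le N}(2k-1)^{-3}$. Letting $N\to\infty$ gives $14\zeta(3)/\pi$.
\end{itemize}
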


For orientable fillings, we further improve this estimate to
\begin{theorem}\label{thm:main orientable}
Every compact, connected orientable Riemannian isometric filling \(M\) of a circle of length \(2\pi\) satisfies
\[
\operatorname{Area}(M)>5.40154.
\]   
\end{theorem}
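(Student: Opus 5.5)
The plan is to refine the Fourier argument behind Theorem~\ref{thm:main}, using orientability to replace mod-$2$ counting by signed integrals (Stokes' theorem). For $x\in M$ set $f_x(s)=d_M(x,\gamma(s))$ and let $c_k(x)=\frac1{2\pi}\int_{\T} f_x(s)e^{-iks}\,ds$ for odd $k$. Each $f_x$ is $1$-Lipschitz in $x$, so each $c_k$ is Lipschitz on $M$. For $v\in T_xM$ we have $dc_k(v)=\frac1{2\pi}\int\langle u_s(x),v\rangle e^{-iks}\,ds$ a.e., where $u_s=\nabla f_x$ is a unit vector. On the boundary, \eqref{eq 1.1} gives $f_{\gamma(t)}(s)=d_{\T}(s,t)$. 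Hence $c_k(\gamma(t))=-\frac{2}{\pi k^2}e^{-ikt}$ for odd $k$, so $c_k$ restricted to $\partial M$ traverses a circle of radius $2/(\pi k^2)$ exactly $k$ times.

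\textbf{Step 1 (signed identities).} Since $M$ is orientable, Stokes' theorem gives, for any Lipschitz complex function $G$ on $M$,
\[
\tfrac{i}{2}\int_M dG\wedge d\bar G=\tfrac{i}{4}\int_{\partial M}(G\,d\bar G-\bar G\,dG).
\]
This is the signed area enclosed by $G(\partial M)$. For $G=c_k$ it equals $k\pi\cdot 4/(\pi^2k^4)$, up to a sign fixed by orientation. Pointwise, $\frac{i}{2}dG\wedge d\bar G$ is bounded by the Jacobian of $G$. Summing the Jacobians of the $c_k$ with weights and using Bessel's inequality on the unit field $s\mapsto u_s$ (as in the proof of Theorem~\ref{thm:main}) gives a pointwise bound $\sum_k w_k|\mathrm{Jac}\,c_k|\le C$. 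Integrating over $M$ recovers $\Area(M)\ge 14\zeta(3)/\pi$.

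\textbf{Step 2 (cubic resonant perturbation).} The boundary values of $c_1^3$ are $-\frac{8}{\pi^3}e^{-3it}$, which has the same frequency as $c_3$. So I replace $c_3$ by $G_\varepsilon=c_3+\varepsilon c_1^3$ with a real parameter $\varepsilon$. On the boundary this is still a circle traversed $3$ times, now with radius $\bigl|\frac{2}{9\pi}+\frac{8\varepsilon}{\pi^3}\bigr|$, so the Stokes identity gives an explicitly larger signed area for suitable $\varepsilon$. This signed identity is exactly what fails without orientability.

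On the interior, $dG_\varepsilon=dc_3+3\varepsilon c_1^2\,dc_1$ with $|c_1|$ bounded. The Jacobian of $G_\varepsilon$ is a quadratic form in $(dc_1,dc_3)$, so the pointwise inequality must be redone jointly for the pair $(c_1,c_3)$ together with the tail $k\ge5$. I would bound $|c_1(x)|$ using $0\le f_x\le$ (bounded by half-perimeter considerations) and the $1$-Lipschitz property of $f_x$ in $s$. I would then optimize a finite-dimensional quadratic problem: the maximum of the weighted Jacobian sum over unit-vector-valued functions $u_s$, whose Fourier data obey Bessel's inequality.

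\textbf{Main obstacle.} The hardest step is the new pointwise inequality. The cross terms $\varepsilon c_1^2\,dc_1\wedge d\bar c_3$ have a phase depending on $x$, and the gain in the boundary term must beat the loss from $|3\varepsilon c_1^2|$ inflating the interior Jacobian. I would first try to absorb the cross term by Cauchy--Schwarz into the $k=1$ and $k=3$ Bessel budgets, keeping $|c_1|$ as an explicit pointwise bound, and then choose $\varepsilon$ and the weights $w_k$ numerically. The stated constant $5.40154$ should arise from this optimization, so the final step is a careful rigorous numerical check that the optimized bound exceeds $5.40154$.
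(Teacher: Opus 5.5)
The gap is in Step 2, at the pointwise inequality you defer. The Cauchy--Schwarz/Bessel route you propose for it cannot succeed.

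\textbf{Your perturbation.} In the normalization of Section~2 one has $c_k=\tfrac12\overline{z_k}$ for odd $k$. So your new coordinate is $\tfrac12\overline{\Psi_3}$ with $\Psi_3=z_3+\mu z_1^3$ and $\mu=\varepsilon/4$. At a frame $(e_1,e_2)$ write $W(\theta)=da_\theta(e_1)+i\,da_\theta(e_2)=\sum_{k\ \mathrm{odd}}\hat W_ke^{ik\theta}$, so that $\Omega_0=\sum_{n\in\mathcal O}(|\hat W_n|^2-|\hat W_{-n}|^2)$. The perturbed signed-area density is $\Omega_0+\mu L+O(\mu^2)$ with
\[
L=6\operatorname{Re}\bigl[z_1^2\bigl(\hat W_3\overline{\hat W_1}-\hat W_{-1}\overline{\hat W_{-3}}\bigr)\bigr].
\]

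\textbf{$L$ does not vanish on extremal planes.} Take $W(\theta)=e^{i\theta}(e^{2i\theta}-\alpha)/(1-\bar\alpha e^{2i\theta})$ with $|\alpha|=1/\sqrt3$. Then $|W|\equiv1$, $\Omega_0=1$, $\hat W_1=-\alpha$ and $\hat W_3=1-|\alpha|^2$. With $|z_1|$ near $4/\pi$ (as happens near $\partial M$) and a suitable phase, $L\approx 64/(\sqrt3\,\pi^2)\approx3.74$. The boundary action grows only by $6\pi\rho_1^3\rho_3\,\mu\approx1.03\,C_0\mu$. Nothing in the pointwise information you use excludes this frame, so without reweighting the ratio action/comass decreases at first order.

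\textbf{Reweighting does not rescue the Bessel-only argument.} Suppose you keep only Bessel's inequality and $|z_1|\le4/\pi$. Then the pointwise maximum of $w_1z_1^*\omega_{\C}+w_3\Psi_3^*\omega_{\C}+\sum_{k\ge5}w_kz_k^*\omega_{\C}$ is the larger of $\sup_{k\ge5}w_k$ and the top eigenvalue of the Hermitian matrix $T^*\operatorname{diag}(w_1,w_3)T$, where $T=\left(\begin{smallmatrix}1&0\\3\mu z_1^2&1\end{smallmatrix}\right)$. Normalizing this maximum to $1$ forces $(1-w_1)(1-w_3)\ge9\mu^2\rho_1^4w_3$. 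The resulting action loss $(1-w_1)\pi\rho_1^2+(1-w_3)\,3\pi\rho_3^2$ is then at least $\sqrt3\cdot6\pi\rho_1^3\rho_3|\mu|\sqrt{w_3}$, which is $\sqrt3$ times the first-order gain. An exact optimization over $w_1,w_3,\mu$ shows the normalized action stays strictly below $C_0$ for every $\mu\neq0$. So the plan as described cannot beat $14\zeta(3)/\pi$, and $5.40154$ will not emerge from it.

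\textbf{What is missing.} You need two ingredients together:
\begin{itemize}
\item the genuinely pointwise constraint $|W|\le1$ (that is, $|\nabla a_\theta|\le1$ for each $\theta$), not just its $L^2$ consequence;
\item a perturbation whose first variation on the extremal planes is a quantity that this constraint forces to vanish.
\end{itemize}

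The paper takes $h(z)=\overline{z_1}^{\,2}Sz$. This feeds every mode $z_{n+2}$ into mode $n$, which has the same boundary frequency, rather than feeding only $z_1^3$ into mode $3$. Its first variation is $2\operatorname{Re}\bigl[z_1^2\sum_n(\overline{\hat W_n}\hat W_{n+2}-\hat W_{-n}\overline{\hat W_{-(n+2)}})\bigr]$, plus a term built from $\hat W_{-1}\overline{\hat W_n}-\overline{\hat W_1}\hat W_{-n}$. Up to negative-frequency terms, the first sum is the $e^{2i\theta}$-coefficient of $|W|^2$, so it vanishes when $|W|\equiv1$. The second term vanishes when all $\hat W_{-n}=0$.

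Quantitatively, $1-|W|^2\ge0$ gives $|L|\le C_*\sqrt\delta+D_*\delta$ with $\delta=1-|\Omega_0|$. Combining this with $|Q|\le Q_*$ and completing the square gives comass $1+O(\lambda^2)$, while the action is $C_0+b\lambda+d\lambda^2$ with $b>0$. The constant $5.40154$ is $B(1/25)/C(1/25)$ for this specific construction.

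For your single-mode perturbation, exploiting $|W|\le1$ would require at least a Schwarz--Pick-type bound $|\hat W_3|\le1-|\hat W_1|^2$ on extremal planes. You would also need a reweighting and a quantitative near-extremal version of that bound. None of this is in your plan, and it is unclear whether any gain would survive.
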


The proof of Theorem \ref{thm:main} is based on the distance functions from boundary points. Their odd Fourier coefficients define planar maps whose two-dimensional Jacobians satisfy a common pointwise bound. A key step is to construct finite orthogonal combinations of these maps whose boundary traces are Jordan curves, while preserving the Jacobian bound and the total boundary action. The area formula and the relative fundamental class with coefficients in \(\Z_2\) then show that the areas enclosed by these curves give lower bounds for the integrals of the Jacobians. This proves Theorem \ref{thm:main} without any orientability assumption. For orientable fillings, we introduce a cubic perturbation that couples Fourier modes with the same boundary frequency. This perturbation increases the boundary action to first order, while a quantitative estimate near the tangent planes on which the unperturbed form is extremal bounds the comass of the pulled-back two-form by \(1+O(\lambda^2)\). Stokes' theorem then gives Theorem \ref{thm:main orientable}.

The use of boundary-distance functions in our proof is related to several results in Finsler geometry. Ivanov \cite{Ivanov11} proved that a Finsler metric on the disk whose geodesics are minimizing is a minimal filling of its boundary among disk fillings, where area is understood in the Holmes-Thompson sense. His argument uses the cyclic order of the gradients of distance functions and also gives a Finsler extension of Pu's inequality. More recently, Chambers \cite{Chambers26} obtained a genus-dependent extension of Ivanov's filling comparison theorem. His proof combines special distance functions and Stokes' theorem with an estimate depending on the genus. Our construction also begins with boundary-distance functions. In contrast to these arguments, we use their Fourier coefficients rather than the pointwise cyclic order of their gradients, and the resulting estimate is independent of the topological type of the filling.

A different approach, closer in spirit to our orientable argument, is based on differential forms in the injective hull of the circle. Z\"ust \cite{Zust25} constructed an exact two-form and established an almost-calibration estimate near the hemisphere for oriented fillings that agree with the hemisphere on a fixed boundary collar. His global projection argument also gives
\[
\Area(M)\geq \frac{\pi^2}{2} \approx 4.93480
\]
for every orientable Riemannian isometric filling of \(\T\). The pulled-back two-form used in the proof of Theorem \ref{thm:main orientable} is similar in spirit, but our Fourier perturbation does not require a prescribed boundary collar. In addition, Theorem \ref{thm:main} improves the above estimate to \eqref{eq 1.2} and does not require orientability.

There is also a discrete approach to the filling problem. Cossarini \cite{Cossarini20} introduced a discretization by systems of curves, called wall systems, and proved that the resulting discrete filling conjecture is equivalent to its reversible Finsler counterpart.
This approach gives another proof of the disk case and establishes the corresponding filling inequality for M\"obius bands with Holmes-Thompson area. More recently, Briggs and Wells \cite{BW26} introduced a graph-theoretic relaxation and proved that
\[
\Area(M)\geq \frac{\sqrt{3}}{4}\pi^2 \approx 4.27366
\]
for every compact Riemannian isometric filling of \(\T\), including
nonorientable fillings. Their passage from the discrete estimate to the Riemannian inequality uses approximation by fine triangulations. It remains open whether their discrete problem is equivalent to Gromov's filling area conjecture. Theorem \ref{thm:main} improves their estimate while applying to the same class of fillings.

We leave open the question of improving the bounds by other nonlinear couplings or by using the topology of the filling. 

The paper is organized as follows. In Section \ref{sec:distance and Fourier}, we introduce the boundary-distance functions and their Fourier coefficients, and prove the pointwise Jacobian estimate used throughout the paper. In Section \ref{sec:universal-proof}, we prove the universal bound by combining orthogonal mixing with a planar coverage argument. In Section \ref{sec:orientable-case}, we treat orientable fillings by constructing a resonant cubic perturbation of the Fourier map and comparing the resulting increase in boundary action with the corresponding change in comass.

\subsection*{AI Disclosure} We used OpenAI language models while developing and checking the Fourier and nonlinear-resonance arguments. The authors are responsible for the
mathematical content. 
\subsection*{Formalization} Lean proofs of Theorems~\ref{thm:main}, \ref{thm:main orientable}, and \ref{thm:nonlinear-formula}, using Lean and Mathlib 4.29.0, are available in the \href{https://github.com/lowrank/gromov-filling-lower-bound/tree/ca06f37383f51bc7bbefd30f036820c83573b427}{companion repository}.

\section{Distance Functions and Fourier Coefficients}
\label{sec:distance and Fourier}

Let $\T=\mathbb R/2\pi\mathbb Z$ denote the circle of length $2\pi$, equipped with its intrinsic distance
$$
d_{\T}(s,t)=\min_{k\in\mathbb Z}|s-t+2\pi k|.
$$
Throughout this section, we fix a Riemannian isometric filling $M$ of $\T$ and an arclength parametrization $\gamma: \T \to \partial M$.

\subsection{Distance functions and the antipodal decomposition}
For each $\theta\in\T$, define the distance function $u_\theta: M \to \R$ by 
$$
u_\theta(x):=d_M(x,\gamma(\theta)). 
$$
The triangle inequality gives
\begin{equation}\label{eq 2.1}
    |u_\theta(x)-u_\varphi(y)|
\le d_M(x,y)+d_{\T}(\theta,\varphi)
\end{equation}
for all $x,y\in M$ and $\theta,\varphi\in\T$. Consequently, $u_\theta(x)$ is $1$-Lipschitz in each variable separately and jointly locally Lipschitz in $(x,\theta)$. 
Define
$$a_\theta(x):=\frac{u_\theta(x)-u_{\theta+\pi}(x)}{2},$$
and
$$\sigma_\theta(x):=\frac{u_\theta(x)+u_{\theta+\pi}(x)-\pi}{2}.$$
Then we have 
$$u_\theta-\frac{\pi}{2}=a_\theta+\sigma_\theta, \qquad a_{\theta+\pi}=-a_\theta,
\qquad \sigma_{\theta+\pi}=\sigma_\theta.$$
Thus, $a_\theta$ and $\sigma_\theta$ are the odd and even parts of $u_\theta-\frac{\pi}{2}$ under the antipodal transformation.

Note that both $a_\theta(x)$ and $\sigma_\theta(x)$ satisfy the same joint Lipschitz estimate \eqref{eq 2.1} as $u_\theta$. For every fixed $x\in M$, the function $\theta\mapsto a_\theta(x)$ is $1$-Lipschitz. Hence it is absolutely continuous and satisfies
$$|\partial_\theta a_\theta(x)|\leq1$$
for almost every $\theta\in\T$.

The following is the standard eikonal identity for distance functions;
compare \cite[Theorem 6.31]{Lee18book}. Since $d_M$ is the intrinsic
distance of a manifold with boundary, we include the short argument.
Unless otherwise specified, all gradients and differentials below are
taken with respect to $x$.

\begin{lemma}\label{lemma 2.2}
For every fixed $\theta\in\T$,
$$|\nabla u_\theta(x)|=1$$
at every point $x\in M^\circ$ at which $u_\theta$ is differentiable. Consequently, this identity holds for almost every $(x,\theta)\in M^\circ\times\T$.
\end{lemma}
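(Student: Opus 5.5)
The upper bound $|\nabla u_\theta(x)|\le 1$ follows from the $1$-Lipschitz property \eqref{eq 2.1}. Fix $x\in M^\circ$ where $u_\theta$ is differentiable and take a unit vector $v\in T_xM$. Let $c(t)=\exp_x(tv)$ for small $t$; this stays in the interior. Then $|u_\theta(c(t))-u_\theta(x)|\le d_M(c(t),x)\le t$. Dividing by $t$ and letting $t\to0$ gives $|du_\theta(v)|\le 1$ for every unit $v$, so $|\nabla u_\theta(x)|\le1$.

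For the lower bound, set $p=\gamma(\theta)$ and $r=u_\theta(x)>0$; note $x\ne p$ since $p\in\partial M$. The plan is to find points $y_n\to x$ with $u_\theta(y_n)\le r-d_M(x,y_n)$ and with $(y_n-x)/d_M(x,y_n)$ converging (in a chart) to a unit vector $w$. Differentiability then gives $du_\theta(w)\le -1$, hence $|\nabla u_\theta(x)|\ge1$.

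To produce the $y_n$, use near-minimizing curves. For each $n$ choose a piecewise smooth curve $\alpha_n$ in $M$ from $x$ to $p$ with length $L(\alpha_n)<r+1/n^2$. Fix a small $\delta>0$ such that the closed geodesic ball $\overline{B}(x,2\delta)$ lies in $M^\circ$ and is a normal neighborhood. Take $\rho_n=1/n<\delta$. Since $\alpha_n$ starts at $x$ and ends at distance $r>\rho_n$, it first leaves $\overline B(x,\rho_n)$ at some point $y_n$ with $d_M(x,y_n)=\rho_n$ (distance in a normal ball equals radial distance). Splitting $\alpha_n$ at $y_n$ gives
\[
\rho_n+u_\theta(y_n)\le L(\alpha_n|_{[x,y_n]})+L(\alpha_n|_{[y_n,p]})<r+1/n^2,
\]
so $u_\theta(y_n)<r-\rho_n+\rho_n^2$. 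Write $y_n=\exp_x(\rho_n w_n)$ with $|w_n|=1$ and pass to a subsequence $w_n\to w$. The Taylor expansion $u_\theta(y_n)=r+\rho_n\,du_\theta(w_n)+o(\rho_n)$ then yields $du_\theta(w)\le-1$. Combined with the upper bound, $|\nabla u_\theta(x)|=1$.

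The main obstacle is that $d_M$ is intrinsic on a manifold with boundary, so minimizing geodesics to $p$ may run along $\partial M$ and need not be smooth. The argument above deliberately avoids geodesics to $p$: it uses only almost-minimizing curves and the local exponential structure at the interior point $x$, where $d_M$ agrees locally with the Riemannian distance of a normal ball. One must check that for $\delta$ small the intrinsic distance from $x$ to points of $\overline B(x,\delta)$ equals the radial distance, even though curves could leave the ball. This holds because any curve leaving $B(x,2\delta)$ has length at least $2\delta$. For the almost-everywhere statement, $u_\theta$ is Lipschitz, so Rademacher's theorem gives differentiability at a.e. $x$ for each $\theta$. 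The set of $(x,\theta)$ where $u_\theta$ is differentiable in $x$ is measurable by joint continuity, and Fubini then gives the identity for a.e. $(x,\theta)\in M^\circ\times\T$.
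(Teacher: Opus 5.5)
Your proof is correct, but your lower bound takes a different route from the paper's.

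The paper uses that $(M,d_M)$ is a compact length space, so a shortest path $\eta$ from $x$ to $\gamma(\theta)$ exists. Since $x$ is interior, an initial segment of $\eta$ is a smooth unit-speed geodesic. Along it $u_\theta$ decreases at exactly unit rate, which gives $du_\theta(x)(\dot\eta(0))=-1$ directly.

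You instead take almost-minimizing curves of length $<r+\rho_n^2$, record their first exit points from small normal balls, and extract a limiting unit direction $w$ by compactness of the unit circle in $T_xM$. This yields only $du_\theta(w)\le -1$, which is all that is needed.

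What your version buys: it needs neither the existence of shortest paths (an Arzel\`a--Ascoli argument) nor the regularity of their initial segments near interior points. It uses only the normal-ball structure at $x$, so it would also work for incomplete metrics where minimizers may fail to exist. What the paper's version buys: it is shorter and produces an explicit direction in which equality holds.

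The boundary is not really the obstacle you describe. The paper's argument only uses the part of $\eta$ near the interior point $x$, so it does not matter whether $\eta$ later runs along $\partial M$.

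For the almost-everywhere statement, there is a simpler option than checking measurability of the set of partial differentiability points. Apply Rademacher's theorem to the jointly locally Lipschitz map $(x,\theta)\mapsto u_\theta(x)$ on $M^\circ\times\T$. Joint differentiability at $(x,\theta)$ implies differentiability of $u_\theta$ at $x$, so the identity holds on a set of full measure.
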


\begin{proof}
Since $M$ is compact, $(M,d_M)$ is a compact length space and hence admits shortest paths between any two points. Let $x\in M^\circ$ be a point at which $u_\theta$ is differentiable, and let $\eta:[0,L]\to M$ be an arclength-parametrized shortest path from $x=\eta(0)$ to $\gamma(\theta)=\eta(L)$. Because $x$ is an interior point, an initial segment of $\eta$ is a smooth unit-speed Riemannian geodesic. Every subpath of $\eta$ is minimizing, and therefore
$$u_\theta(\eta(h))=L-h=u_\theta(x)-h$$
for all sufficiently small $h>0$. It follows that
$$du_\theta(x)(\dot\eta(0))=\lim_{h \to 0^+}\frac{u_\theta(\eta(h))-u_\theta(x)}{h}=-1.$$
Since $|\dot\eta(0)|=1$, we obtain $|\nabla u_\theta(x)|\geq1$.
The $1$-Lipschitz property of $u_\theta$ gives the reverse inequality, and hence $|\nabla u_\theta(x)|=1$. The almost-everywhere assertion follows from Rademacher's theorem (see \cite[Theorem 3.2]{EvansGariepy2015}) and Fubini's theorem.
\end{proof}

Applying Lemma \ref{lemma 2.2} to $\theta$ and $\theta+\pi$, the
parallelogram identity gives
\begin{equation}\label{eq 2.5}
|\nabla a_\theta|^2+|\nabla\sigma_\theta|^2
=\frac12\left(
|\nabla u_\theta|^2+|\nabla u_{\theta+\pi}|^2
\right)
=1
\end{equation}
for almost every $(x,\theta)\in M^\circ\times\T$. This identity provides the differential estimate needed for the Fourier argument.

\subsection{Fourier coordinates and the Jacobian estimate}

For every integer $n\geq1$, define
$$A_n(x):=\frac1\pi\int_0^{2\pi}a_\theta(x)\cos(n\theta)\,d\theta,$$
and
$$B_n(x):=\frac1\pi\int_0^{2\pi}a_\theta(x)\sin(n\theta)\,d\theta.$$
The uniform Lipschitz bound for $a_\theta$ shows that $A_n$ and $B_n$ are Lipschitz functions on $M$. We write
$$F_n:=(A_n,B_n):M\longrightarrow \R^2,$$
and 
$$z_n:=A_n+iB_n : M\longrightarrow \C.$$
The antipodal symmetry $a_{\theta+\pi}=-a_\theta$ implies that all even-indexed coefficients vanish. For odd $n$, the Fourier coefficients of $a_\theta$ agree with those of $u_\theta$.

We observe that the boundary map $z_n|_{\partial M}:\partial M \to \C$ can be calculated explicitly.

\begin{lemma}\label{lem:boundary-fourier}
For every positive odd integer $n$,
$$z_n(\gamma(s))=-\rho_ne^{ins},$$
where $\rho_n:=\frac4{\pi n^2}$. 
\end{lemma}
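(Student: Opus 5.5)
On the boundary, the isometric filling condition \eqref{eq 1.1} makes everything explicit, so the plan is to compute $a_\theta(\gamma(s))$ as a function of $\theta$ and then integrate against $e^{in\theta}$. By \eqref{eq 1.1}, $u_\theta(\gamma(s))=d_{\T}(\theta,s)$. Writing $t=\theta-s$ reduced to $(-\pi,\pi]$, this gives $u_\theta(\gamma(s))=|t|$ and $u_{\theta+\pi}(\gamma(s))=\pi-|t|$. Hence
\[
a_\theta(\gamma(s))=|t|-\tfrac{\pi}{2}=:g(\theta-s),
\]
where $g$ is the $2\pi$-periodic triangle wave $g(t)=d_{\T}(t,0)-\pi/2$. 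In particular, $a_\theta(\gamma(s))$ depends only on $\theta-s$.

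Next I will combine the two real coefficients into $z_n=\frac1\pi\int_0^{2\pi}a_\theta e^{in\theta}\,d\theta$. Substituting $\theta=s+t$ and using periodicity,
\[
z_n(\gamma(s))=e^{ins}\,\frac1\pi\int_{-\pi}^{\pi}g(t)e^{int}\,dt .
\]
Since $g$ is even, the sine part vanishes, and the remaining factor is $\frac2\pi\int_0^\pi (t-\frac\pi2)\cos(nt)\,dt$. The constant term integrates to zero for $n\ge1$. Integrating by parts once,
\[
\int_0^\pi t\cos(nt)\,dt=\frac{(-1)^n-1}{n^2},
\]
which equals $-2/n^2$ for odd $n$. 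The factor is therefore $\frac2\pi\cdot(-\frac{2}{n^2})=-\frac{4}{\pi n^2}=-\rho_n$, giving $z_n(\gamma(s))=-\rho_n e^{ins}$.

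There is no real obstacle here; the only points needing care are bookkeeping. First, I must check the definitions of $A_n$ and $B_n$: the $\cos$ and $\sin$ coefficients give the real and imaginary parts of $\frac1\pi\int a_\theta e^{in\theta}d\theta$, so the sign convention of $e^{+in\theta}$ is correct. Second, $z_n$ is defined on $M$ by integrals of $a_\theta(x)$ and evaluated at boundary points directly. This is legitimate since $a_\theta$ is continuous on all of $M$, including $\partial M$, so no limiting argument from the interior is needed.
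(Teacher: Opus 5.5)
Your proposal is correct and follows essentially the same route as the paper: reduce $a_\theta(\gamma(s))$ via \eqref{eq 1.1} to the triangle wave $d_{\T}(\theta-s,0)-\pi/2$, use translation invariance to factor out $e^{ins}$, and compute the cosine integral $\frac{2}{\pi}\int_0^\pi (t-\frac{\pi}{2})\cos(nt)\,dt=\frac{2((-1)^n-1)}{\pi n^2}$. Your bookkeeping remarks on the $e^{+in\theta}$ convention and on evaluating $z_n$ directly at boundary points are both accurate.
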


\begin{proof}
On $\partial M$, the isometric filling condition \eqref{eq 1.1} implies
$$u_\theta(\gamma(s))=d_{\T}(\theta,s).$$
It follows from $d_{\T}(\theta,s)+d_{\T}(\theta+\pi,s)=\pi$ that 
$$a_\theta(\gamma(s))
=d_{\T}(\theta,s)-\frac{\pi}{2}=f(\theta-s),$$
where
$$f(t):=d_{\T}(t,0)-\frac{\pi}{2}.$$
Translation invariance on $\T$ gives
$$ z_n(\gamma(s))=\frac{e^{ins}}{\pi}\int_0^{2\pi}f(t)e^{int}\,dt.$$
Identifying $\T$ with $[-\pi,\pi]$, the function $f$ is even and $f(t)=|t|-\pi/2$. Therefore,
\begin{align*}
z_n(\gamma(s))
&=\frac{2e^{ins}}{\pi}
\int_0^\pi\left(t-\frac{\pi}{2}\right)\cos(nt)\,dt\\
&=\frac{2}{\pi n^2}\bigl((-1)^n-1\bigr)e^{ins}.
\end{align*}
For odd $n$, this is $-4e^{ins}/(\pi n^2)$.
\end{proof}

The following elementary lemma is a consequence of Rademacher's theorem, Fubini's theorem, and dominated convergence. We include the details because we need a common set of full measure for all Fourier modes.

\begin{lemma}\label{lemma 2.4}
There exists a set $E\subset M^\circ$ of full measure such that all the functions $A_n$ and $B_n$, $n\geq1$, are differentiable at every $x\in E$ and, for every $v\in T_xM$,
$$dA_n(x)(v)=\frac1\pi\int_0^{2\pi} d a_\theta(x)(v)\cos(n\theta)\,d\theta,$$
and
$$dB_n(x)(v)=\frac1\pi\int_0^{2\pi} d a_\theta(x)(v)\sin(n\theta)\,d\theta.
$$
Moreover, $E$ may be chosen so that \eqref{eq 2.5} holds for almost every $\theta$ at every $x\in E$.
\end{lemma}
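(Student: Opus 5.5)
The plan is to work locally in a coordinate chart and use the joint Lipschitz bound \eqref{eq 2.1}. Since $M^\circ$ is covered by countably many coordinate charts, it suffices to produce, in each chart $U\subset M^\circ$ (identified with an open set of $\R^2$), a full-measure subset on which all conclusions hold; then $E$ is the union. The countable family $\{A_n,B_n\}_{n\ge1}$ causes no difficulty, because a countable intersection of full-measure sets still has full measure.

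\textbf{Step 1 (a good set of full measure).} Consider the set
$$G:=\{(x,\theta)\in U\times\T:\ a_\theta \text{ and } \sigma_\theta \text{ are differentiable at } x \text{ and } \eqref{eq 2.5}\text{ holds}\}.$$
By Rademacher's theorem applied to each fixed $\theta$, together with Lemma~\ref{lemma 2.2}, each $\theta$-slice of $G$ has full measure in $U$. The set $G$ is measurable: differentiability of a Lipschitz function can be expressed through countably many difference quotients with rational increments, and these depend continuously on $(x,\theta)$. Fubini's theorem then gives a full-measure set $E_1\subset U$ such that for every $x\in E_1$ the section $\{\theta:(x,\theta)\in G\}$ has full measure in $\T$. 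This already secures the last assertion of the lemma.

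\textbf{Step 2 (identifying the candidate derivative).} Each $A_n$ and $B_n$ is Lipschitz, with constant at most $2$ by \eqref{eq 2.1}. Hence Rademacher's theorem yields a full-measure set $E_2$ on which every $A_n$ and $B_n$ is differentiable. Set $E:=E_1\cap E_2$.

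Fix $x\in E$ and $v\in T_xM$. The difference quotient is
$$\frac{A_n(x+hv)-A_n(x)}{h}=\frac1\pi\int_0^{2\pi}\frac{a_\theta(x+hv)-a_\theta(x)}{h}\cos(n\theta)\,d\theta.$$
The integrand is bounded by $C|v|$, uniformly in $h$ and $\theta$, because of the Lipschitz bound. For almost every $\theta$, namely those in the good section from Step 1, the integrand converges to $da_\theta(x)(v)\cos(n\theta)$. Dominated convergence then shows that the directional derivative equals the claimed integral. Since $A_n$ is differentiable at $x$, its differential coincides with this directional derivative. The same argument applies to $B_n$.

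\textbf{Main obstacle.} The only delicate point is the measurability of $G$ and of $(x,\theta)\mapsto da_\theta(x)(v)$, which is needed to apply Fubini's theorem. I would handle it as follows. Write the derivative as a limit of difference quotients along $h=1/k$, which are continuous in $(x,\theta)$; the limit is therefore Borel where it exists. Express full differentiability through a $\limsup$ over rational directions and radii, using the Lipschitz bound to pass from a countable dense set of directions to all directions. This shows that $G$ is Borel, and it also makes $\theta\mapsto da_\theta(x)(v)$ measurable.
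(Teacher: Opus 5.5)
Your argument is correct and follows essentially the paper's route. You apply Rademacher and Fubini to the jointly Lipschitz function $(x,\theta)\mapsto a_\theta(x)$, obtain points $x$ at which almost every slice $a_\theta$ is differentiable, and then use dominated convergence under the Fourier integral; your treatment of measurability is, if anything, more careful than the paper's.

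The one difference is in how differentiability of $A_n,B_n$ is obtained. The paper applies dominated convergence to the full first-order remainder $R_\theta(h)=(a_\theta(x+h)-a_\theta(x)-D_xa_\theta(x)h)/|h|$, which is bounded and tends to $0$ for almost every $\theta$. This gives differentiability of every $A_n,B_n$ directly at each point of your $E_1$, so your separate Rademacher step producing $E_2$ is unnecessary, though harmless.
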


\begin{proof}
The map $(x,\theta)\longmapsto a_\theta(x)$ is jointly locally Lipschitz. Rademacher's theorem and Fubini's theorem therefore give a set $E_0\subset M^\circ$ of full measure such that, for every $x\in E_0$, the function $a_\theta$ is differentiable with respect to $x$ for almost every $\theta \in \T$.

Fix $x\in E_0$ and work in local coordinates. Let $L$ be a common local Lipschitz constant for the functions $a_\theta$. For almost every $\theta$, set
$$R_\theta(h):=\frac{a_\theta(x+h)-a_\theta(x)-D_xa_\theta(x)h}{|h|}.$$
Then $R_\theta(h)\to0$ as $h\to0$, while
$$|R_\theta(h)|\leq2L$$
for all sufficiently small $h$. We define $D_xa_\theta(x)=0$ on the exceptional set of $\theta$'s. Its coordinate components are measurable functions of $\theta$, since they are almost-everywhere limits of measurable difference quotients.

For $n\geq1$, define the linear forms
$$\Lambda^A_{n,x}(v):=\frac1\pi\int_0^{2\pi}d a_\theta(x)(v)\cos(n\theta)\,d\theta$$
and
$$\Lambda^B_{n,x}(v):=\frac1\pi\int_0^{2\pi} d a_\theta(x)(v)\sin(n\theta)\,d\theta.$$
Dominated convergence gives
$$\frac{|A_n(x+h)-A_n(x)-\Lambda^A_{n,x}(h)|}{|h|} \longrightarrow 0
$$
as $h\to0$, and the same argument applies to $B_n$. Thus $A_n$ and $B_n$ are differentiable at $x$, with differentials $\Lambda^A_{n,x}$ and $\Lambda^B_{n,x}$, respectively. Since $E_0$ is independent of $n$, these conclusions hold simultaneously for all $n\geq1$.

Applying the preceding argument in a countable coordinate cover and intersecting $E_0$ with the full-measure set obtained from \eqref{eq 2.5} by Fubini's theorem gives the required set $E$.
\end{proof}

We now record the pointwise Jacobian estimate for finite orthogonal combinations of the Fourier coordinate maps.

\begin{proposition}\label{prop 2.5}
Let $m_1,\ldots,m_N$ be distinct positive odd integers, and let $U=(u_{jk})\in O(N)$. Define
$$G_j:=\sum_{k=1}^N u_{jk}F_{m_k},\qquad 1\leq j\leq N.$$
Then
\begin{equation}\label{eq:jacobian-bound}
\sum_{j=1}^N J_2G_j(x)\leq1
\end{equation}
for almost every $x\in M^\circ$. 
Here $J_2G_j$ denotes the two-dimensional
Jacobian.
\end{proposition}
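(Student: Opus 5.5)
The plan is to work pointwise at a point \(x\) of the full-measure set \(E\) from Lemma \ref{lemma 2.4}. There I bound each \(J_2G_j(x)\) by half the squared Hilbert--Schmidt norm of \(dG_j(x)\), then use orthogonality of \(U\) and Bessel's inequality in the variable \(\theta\) to control the total. The eikonal identity \eqref{eq 2.5} supplies the final bound.

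Fix \(x\in E\) and an orthonormal basis \(e_1,e_2\) of \(T_xM\), and write \(w_\theta:=\nabla a_\theta(x)\in\R^2\) in this basis. This is defined for almost every \(\theta\) and is measurable in \(\theta\). By Lemma \ref{lemma 2.4}, \(dF_n(x)\) is the \(2\times 2\) matrix with rows
\[
p_n=\frac1\pi\int_0^{2\pi}w_\theta\cos(n\theta)\,d\theta,\qquad q_n=\frac1\pi\int_0^{2\pi}w_\theta\sin(n\theta)\,d\theta .
\]
Since the domain is two-dimensional, \(J_2G_j=|\det dG_j|\). For a matrix with rows \(p,q\), the AM--GM inequality gives \(|\det|\le |p|\,|q|\le \tfrac12(|p|^2+|q|^2)\), which is \(\tfrac12\|\cdot\|_{HS}^2\). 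Hence
\[
\sum_j J_2G_j(x)\le \tfrac12\sum_j \|dG_j(x)\|_{HS}^2 .
\]
Next, \(dG_j=\sum_k u_{jk}\,dF_{m_k}\), where the matrices are viewed as vectors in \(\R^4\) with the Frobenius inner product. Because \(U\in O(N)\), this gives
\[
\sum_j\|dG_j\|_{HS}^2=\sum_k \|dF_{m_k}\|_{HS}^2=\sum_k \bigl(|p_{m_k}|^2+|q_{m_k}|^2\bigr).
\]

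Now apply Bessel's inequality to each coordinate function \(\theta\mapsto w^i_\theta\), \(i=1,2\), which lies in \(L^2(\T)\) since it is bounded. Summing over all \(n\ge1\), and discarding the constant mode and any other modes, gives
\[
\sum_{n\ge1}\bigl(|p_n|^2+|q_n|^2\bigr)\le \frac1\pi\int_0^{2\pi}|w_\theta|^2\,d\theta .
\]
Because the \(m_k\) are distinct, the sum over \(k\) is bounded by this. Finally, by \eqref{eq 2.5}, which holds for almost every \(\theta\) at \(x\in E\), we have \(|w_\theta|^2=|\nabla a_\theta(x)|^2\le 1\). The right side is therefore at most \(2\), and
\[
\sum_j J_2G_j(x)\le\tfrac12\cdot 2=1 .
\]

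The only delicate point is making sure the differentiation under the integral and the eikonal bound hold simultaneously at the same point for almost every \(\theta\), with \(w_\theta\) measurable in \(\theta\). This is exactly what Lemma \ref{lemma 2.4} provides, so I expect no real obstacle beyond bookkeeping. The normalization \(\frac1\pi\) matches the Bessel inequality for the system \(\cos n\theta,\sin n\theta\) on \([0,2\pi]\), which is what makes the constant come out to exactly \(1\).
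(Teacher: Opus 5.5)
Your proof is correct and follows essentially the same argument as the paper. You fix $x\in E$, bound each Jacobian by half the squared Hilbert--Schmidt norm of $dG_j$, use orthogonality of $U$ and Bessel's inequality in $\theta$, and finish with \eqref{eq 2.5}. The only differences are cosmetic: you apply Hadamard's inequality to the rows $\nabla A_n,\nabla B_n$ rather than to the columns $dG_j(e_1),dG_j(e_2)$, and you bound $|\nabla a_\theta|^2\le 1$ directly instead of writing it as $1-|\nabla\sigma_\theta|^2$.
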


\begin{proof}
Let $E\subset M^\circ$ be the full-measure set provided by Lemma \ref{lemma 2.4}. Fix $x\in E$, and let $(e_1,e_2)$ be an orthonormal basis of $T_xM$. All differentials below are evaluated at $x$.

For $\alpha=1,2$, set
$$h_\alpha(\theta):=da_\theta(e_\alpha).$$
By Lemma \ref{lemma 2.4}, we have
$$dA_{m_k}(e_\alpha)=\frac1{\sqrt\pi}\left\langle
h_\alpha,\frac{\cos(m_k\theta)}{\sqrt\pi}\right\rangle_{L^2},$$
and the analogous identity holds for $dB_{m_k}(e_\alpha)$ with $\sin(m_k\theta)$ in place of $\cos(m_k\theta)$. Since
$$\left\{\frac{\cos(m_k\theta)}{\sqrt\pi},\frac{\sin(m_k\theta)}{\sqrt\pi}
\right\}_{k=1}^N$$
is an orthonormal family in $L^2([0,2\pi])$, Bessel's inequality gives
$$\sum_{k=1}^N|dF_{m_k}(e_\alpha)|^2 \leq\frac1\pi\int_0^{2\pi}
|da_\theta(e_\alpha)|^2\,d\theta.$$
Moreover, $dG_j=\sum_k u_{jk}dF_{m_k}$, so the orthogonality of $U$ implies
$$\sum_{j=1}^N|dG_j(e_\alpha)|^2=\sum_{k=1}^N|dF_{m_k}(e_\alpha)|^2.$$
Consequently,
\begin{align*}
\sum_{j=1}^N J_2G_j(x)
&=\sum_{j=1}^N |dG_j(e_1)\wedge dG_j(e_2)|\\
&\leq\frac12\sum_{j=1}^N \left(|dG_j(e_1)|^2+|dG_j(e_2)|^2\right)\\
&\leq\frac1{2\pi}\int_0^{2\pi} \left(|da_\theta(e_1)|^2+|da_\theta(e_2)|^2 \right)\,d\theta\\
&=\frac1{2\pi}\int_0^{2\pi}|\nabla a_\theta(x)|^2\,d\theta \\
&=1-\frac1{2\pi}\int_0^{2\pi}|\nabla\sigma_\theta(x)|^2\,d\theta \\
& \leq 1,
\end{align*}
where the last equality follows from \eqref{eq 2.5}. This proves \eqref{eq:jacobian-bound} on $E$, and hence almost everywhere in $M^\circ$. 
\end{proof}

The estimate in Proposition \ref{prop 2.5} is uniform in the choice of the frequencies and the orthogonal matrix. In the next section, we use this freedom to choose $m_k$ and $U$ so that every boundary trace $G_j|_{\partial M}$ is a Jordan curve. The areas enclosed by these curves will then give lower bounds for the integrals of the Jacobians.

\section{The Universal Area Bound}\label{sec:universal-proof}

In this section, we prove Theorem \ref{thm:main}. We first construct orthogonal combinations of the odd Fourier modes for which the first boundary mode strictly dominates all the remaining modes. This makes every boundary trace a Jordan curve. We then use the relative fundamental class with coefficients in $\Z_2$ and the area formula to bound the integral of each Jacobian from below. Together with Proposition \ref{prop 2.5}, this gives the desired estimate without any orientability assumption.

Fix $N\geq1$ and set
$$m_k=2k-1,\qquad w_k=\frac1{m_k},\qquad1\leq k\leq N.$$
For $2\leq k\leq N$, let
$$c_k=\frac1{\sqrt{1+4w_k^2}},\qquad s_k=\frac{2w_k}{\sqrt{1+4w_k^2}}.$$
Let $R_k\in O(N)$ act as the identity on the orthogonal complement of
$\operatorname{span}\{e_1,e_k\}$ and have matrix
$$\begin{pmatrix}
c_k&-s_k\\
s_k&c_k
\end{pmatrix}$$
in the ordered basis $(e_1,e_k)$. Define
$$U_N=R_2R_3\cdots R_N=(u_{jk})_{1\leq j,k\leq N}\in O(N).$$
For $N=1$, we take $U_1=(1)$. All empty products below are understood to be $1$.

\begin{lemma}\label{lemma 3.1}
The first row of $U_N$ is given by
$$ u_{11}=\prod_{\ell=2}^N c_\ell,
\qquad
u_{1k}=-s_k\prod_{\ell=2}^{k-1}c_\ell
\quad (2\leq k\leq N).
$$
For $2\leq j\leq N$, we have
$$u_{j1}=s_j\prod_{\ell=j+1}^N c_\ell,
\qquad
u_{jk}=
\begin{cases}
0,&2\leq k<j,\\
c_j,&k=j,\\
-s_js_k\displaystyle\prod_{\ell=j+1}^{k-1}c_\ell,
&j<k\leq N.
\end{cases}
$$
\end{lemma}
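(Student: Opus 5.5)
We argue by induction on $N$, comparing $U_N$ with $U_{N-1}$. Write $U_N=U'_{N-1}R_N$, where $U'_{N-1}=R_2\cdots R_{N-1}$ acts on $\R^N$. Each $R_k$ with $k\le N-1$ fixes $e_N$, so $U'_{N-1}$ is block diagonal: its upper-left $(N-1)\times(N-1)$ block is $U_{N-1}$, and its last diagonal entry is $1$. For $N=1$ the formulas reduce to $u_{11}=1$, which is the empty-product convention. For $N=2$ we have $U_2=R_2$, so $u_{11}=c_2$, $u_{12}=-s_2$, $u_{21}=s_2$ and $u_{22}=c_2$, as claimed.

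For the inductive step, we use that right multiplication by $R_N$ changes only columns $1$ and $N$. If $C_1$ and $C_N$ denote the old columns, the new columns are
\[
C_1'=c_NC_1+s_NC_N,\qquad C_N'=-s_NC_1+c_NC_N,
\]
obtained by reading off $R_N e_1=c_Ne_1+s_Ne_N$ and $R_Ne_N=-s_Ne_1+c_Ne_N$. In $U'_{N-1}$, the old column $C_N$ is $e_N$, and column $1$ has last entry $0$.

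We then read off the entries.

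\emph{Row $1$.} The entry $u_{11}$ picks up a factor $c_N$, which extends the product. For $2\le k\le N-1$, the entry $u_{1k}$ is unchanged, and its formula does not involve $c_N$. The new entry is $u_{1N}=-s_N\cdot(\text{old }u_{11})=-s_N\prod_{\ell=2}^{N-1}c_\ell$.

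\emph{Rows $2\le j\le N-1$.} The entry $u_{j1}$ becomes $c_N$ times the old one, giving $s_j\prod_{\ell=j+1}^{N}c_\ell$. The entry $u_{jN}$ becomes $-s_N\,(\text{old }u_{j1})=-s_js_N\prod_{\ell=j+1}^{N-1}c_\ell$. All other entries are unchanged, so the zeros for $k<j$, the value $c_j$ at $k=j$, and the formulas for $j<k<N$ persist.

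\emph{Row $N$.} The old row $N$ is $e_N^T$. The new row has $u_{N1}=s_N$, $u_{NN}=c_N$, and zeros elsewhere. This matches the claim, since $\prod_{\ell=N+1}^{N}$ is empty and $2\le k<N$ gives $0$.

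There is no real obstacle beyond this bookkeeping. The one point needing care is the right-multiplication convention: $R_N$ mixes columns, not rows, and the rotation sign must be read off so that $R_Ne_1=c_Ne_1+s_Ne_N$. The only other check is that the empty-product conventions match the boundary cases $k=2$, $j=N$, and $k=j+1$.
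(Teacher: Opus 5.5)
Your proof is correct and is essentially the paper's argument. Both track how right multiplication by $R_k$ replaces columns $1$ and $k$ via $C_1\mapsto c_kC_1+s_ke_k$ and $C_k\mapsto -s_kC_1+c_ke_k$, using that the $k$-th column of the preceding partial product is $e_k$. The only difference is packaging: you induct on $N$ through the block-diagonal embedding of $U_{N-1}$, while the paper fixes $N$ and observes that the $k$-th column is frozen after step $k$.
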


\begin{proof}
For $k\geq2$, let $P_{k-1}=R_2\cdots R_{k-1}$, where $P_1=I$, and let $C_1^{(k-1)}$ denote its first column. Since $P_{k-1}$ fixes $e_k$, its $k$-th column is $e_k$. Right multiplication by $R_k$ replaces the first and $k$-th columns by
$$C_1^{(k)}=c_kC_1^{(k-1)}+s_ke_k$$
and
$$C_k^{(k)}=-s_kC_1^{(k-1)}+c_ke_k,$$
respectively, and leaves all other columns unchanged. The first recursion determines the first column of $U_N$. The second determines the $k$-th column, which is unchanged by all subsequent rotations. The stated formulas follow.
\end{proof}

\begin{lemma}\label{lemma 3.2}
Every row of $U_N$ satisfies
$$|u_{j1}|>\sum_{k=2}^N\frac{|u_{jk}|}{m_k},\qquad 1\leq j\leq N.$$
\end{lemma}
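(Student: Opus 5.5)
The plan is to substitute the explicit entries of Lemma~\ref{lemma 3.1} into the inequality, divide each row by its first entry, and reduce everything to one numerical estimate on the tail sums $\sum w_k^2$. The key identities are
$$s_k/c_k=2w_k,\qquad 1/c_k=\sqrt{1+4w_k^2},$$
which turn ratios of products of $c_\ell$ into products of $\sqrt{1+4w_\ell^2}$. The case $N=1$ is trivial, since the right-hand side is an empty sum and $|u_{11}|=1$.

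\emph{First row.} All $c_\ell>0$, so $u_{11}>0$. Dividing the claimed inequality by $u_{11}=\prod_{\ell=2}^N c_\ell$, the $k$-th term becomes
$$\frac{w_k s_k\prod_{\ell=2}^{k-1}c_\ell}{\prod_{\ell=2}^N c_\ell}=\frac{w_ks_k}{c_k}\prod_{\ell=k+1}^N\frac1{c_\ell}=2w_k^2\prod_{\ell=k+1}^N\sqrt{1+4w_\ell^2}.$$
So it suffices to show
$$\sum_{k=2}^N 2w_k^2\prod_{\ell=k+1}^N\sqrt{1+4w_\ell^2}<1.$$
I will bound each product by the full infinite product, using $\log\sqrt{1+4w^2}\le 2w^2$. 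With $S_2:=\sum_{k\ge2}(2k-1)^{-2}=\pi^2/8-1\approx0.2337$, the left side is at most
$$2S_2\,e^{2S_2}\approx0.467\cdot1.596<0.75<1.$$

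\emph{Rows $j\ge2$.} Here $u_{j1}=s_j\prod_{\ell>j}c_\ell>0$, and $|u_{jk}|=0$ for $2\le k<j$. Dividing by $u_{j1}$, the diagonal term $w_jc_j$ becomes
$$\frac{w_jc_j}{s_j\prod_{\ell>j}c_\ell}=\frac12\prod_{\ell>j}\sqrt{1+4w_\ell^2}.$$
The terms with $k>j$ become, exactly as in the first row, $2w_k^2\prod_{\ell=k+1}^N\sqrt{1+4w_\ell^2}$. Writing $T_j:=\sum_{k>j}2w_k^2$, it therefore suffices to show
$$\Bigl(\tfrac12+T_j\Bigr)e^{T_j}<1.$$
Since $T_j$ decreases in $j$, the worst case is $j=2$. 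There $T_2\le 2(S_2-1/9)\approx0.245$, which gives $(0.745)(1.278)\approx0.952<1$.

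The main obstacle is this second case. The diagonal contribution $1/2$ is not small, so the estimate is tighter there than in the first row. The crude bound $\prod\le e^{T_j}$ still leaves a margin of roughly $5\%$. If a sharper argument were needed, I would first try bounding the products by finite explicit products plus a tail estimate. The plain exponential bound appears sufficient, however, and strictness follows because all the numerical bounds are strict.
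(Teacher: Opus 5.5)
Your proof is correct and follows essentially the same route as the paper: substitute the entries from Lemma~\ref{lemma 3.1}, use $s_k=2w_kc_k$, and control the products of the $c_\ell$ through $\log(1+4w^2)\le 4w^2$ in terms of tail sums of $w_k^2$. The differences are only in the bookkeeping. The paper uses the linearized bound $\prod c_\ell\ge 1-2\sum w_\ell^2$ and the integral comparison $S_r=\sum_{k>r}w_k^2<1/(4r)$, which reduces the claim to the clean conditions $1-4S_1>0$ and $1-8S_j>0$; you instead keep the exponential bound (which is slightly sharper) and evaluate the tails numerically from $\sum_{k\ge1}(2k-1)^{-2}=\pi^2/8$, and your values $\approx0.746$ and $\approx0.952$ are correct.
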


\begin{proof}
For $1\leq r\leq N$, write
$$S_r=\sum_{k=r+1}^Nw_k^2.$$
The midpoint inequality for the strictly convex function $x\mapsto(2x-1)^{-2}$ gives
$$S_r \leq\sum_{k=r+1}^{\infty}\frac1{(2k-1)^2} < \int_{r+1/2}^{\infty}\frac{dx}{(2x-1)^2} =\frac1{4r}. $$
Moreover,
$$\log c_k=-\frac12\log(1+4w_k^2)\geq-2w_k^2.$$
It follows that, for every finite set $I\subset\{2,\ldots,N\}$,
$$\prod_{k\in I}c_k \geq \exp\left(-2\sum_{k\in I}w_k^2\right) \geq1-2\sum_{k\in I}w_k^2. $$
By Lemma \ref{lemma 3.1} and the identity $s_k=2w_kc_k\leq2w_k$, we have
$$|u_{11}|\geq1-2S_1$$
and
$$\sum_{k=2}^Nw_k|u_{1k}| \leq\sum_{k=2}^Nw_ks_k \leq2S_1.$$
Consequently,
$$|u_{11}|-\sum_{k=2}^Nw_k|u_{1k}| \geq1-4S_1>0. $$

For $j\geq2$, Lemma \ref{lemma 3.1} gives
$$|u_{j1}|=2w_jc_j\prod_{\ell=j+1}^Nc_\ell \geq2w_jc_j(1-2S_j), $$
whereas
\begin{align*}
\sum_{k=2}^Nw_k|u_{jk}|
&=w_jc_j
+s_j\sum_{k=j+1}^Nw_ks_k
\prod_{\ell=j+1}^{k-1}c_\ell\\
&\leq w_jc_j(1+4S_j).
\end{align*}
Therefore,
$$|u_{j1}|-\sum_{k=2}^Nw_k|u_{jk}| \geq w_jc_j(1-8S_j)>0,$$
where the last inequality follows from $S_j<1/(4j)\leq1/8$.
\end{proof}

The preceding domination inequality is chosen precisely for the
following elementary lemma.

\begin{lemma}\label{lemma 3.3}
Let $m_1,\ldots,m_N$ be positive integers such that $1=m_1<m_2<\cdots<m_N,$
and let $a_1,\ldots,a_N\in\C$. Suppose that
$$ |a_1|>\sum_{k=2}^Nm_k|a_k|.$$
Then the map
$$ z:\T\longrightarrow\C, \qquad z(s)=\sum_{k=1}^Na_ke^{im_ks}, $$
is a smooth embedding with winding number $1$ about the origin. If $\Omega_z$ is the bounded component of $\C\setminus z(\T)$, then
$$ |\Omega_z|=\pi\sum_{k=1}^Nm_k|a_k|^2.$$
\end{lemma}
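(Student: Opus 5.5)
The plan is to factor out the dominant first mode and view $z$ as a small perturbation of a circle traversed once. Write
$$z(s)=e^{is}\bigl(a_1+g(s)\bigr),\qquad g(s):=\sum_{k=2}^N a_ke^{i(m_k-1)s}.$$
Since $m_k\ge1$, we have $|g(s)|\le\sum_{k\ge2}|a_k|\le\sum_{k\ge2}m_k|a_k|<|a_1|$. Hence $z$ never vanishes. Smoothness is clear.

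For the winding number, I would use the homotopy $z_t(s)=e^{is}(a_1+tg(s))$, $t\in[0,1]$. It avoids the origin for every $t$, so the winding number of $z$ equals that of $a_1e^{is}$, namely $1$.

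The key step is to show that $z$ is an immersion whose argument is strictly monotone. I would compute
$$\operatorname{Im}\bigl(\overline{z(s)}\,z'(s)\bigr)=\frac{d}{ds}\arg z(s)\cdot|z(s)|^2 .$$
Write $z'(s)=i\sum_k m_ka_ke^{im_ks}=ie^{is}\bigl(a_1+h(s)\bigr)$ with $h(s):=\sum_{k\ge2}m_ka_ke^{i(m_k-1)s}$, so $|h|\le\sum_{k\ge2} m_k|a_k|=:\varepsilon<|a_1|$. Then
$$\operatorname{Im}(\bar z z')=\operatorname{Re}\bigl(\overline{(a_1+g)}(a_1+h)\bigr)\ge|a_1|^2-|a_1|(|g|+|h|)-|g||h|.$$
Using $|g|\le\varepsilon$ only gives $|a_1|^2-2|a_1|\varepsilon-\varepsilon^2$, which need not be positive, so this crude bound is insufficient. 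This is the expected main obstacle.

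To get around it, I would instead show that $z'$ never vanishes and that the tangent direction turns monotonically, i.e.\ that the curve is locally convex. Indeed $|a_1+h|\ge|a_1|-\varepsilon>0$, so $z'\neq0$ and the tangent angle is well defined. The tangent winding of $z'=ie^{is}(a_1+h)$ equals $1$ by the same homotopy argument. A closed immersed curve with tangent turning number $1$ need not be embedded, so I would add the star-shaped argument: $\arg z$ is strictly increasing when $\operatorname{Re}\bigl(\overline{(a_1+g)}(a_1+h)\bigr)>0$. This holds if both $a_1+g$ and $a_1+h$ lie in the open disc of radius $|a_1|$ centered at $a_1$, since any two points of that disc make an angle less than $\pi/2$ with $a_1$ as seen from the origin. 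But their angle sum can reach $\pi$, so I would sharpen the estimate as follows. Both $g$ and $h$ are bounded by $\varepsilon$, and the two points $a_1+g$ and $a_1+h$ each lie within angle $\arcsin(\varepsilon/|a_1|)<\pi/2$ of $a_1$. So their mutual angle is less than $\pi$, not less than $\pi/2$, and this is not enough by itself.

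The fallback is to prove injectivity directly. Suppose $z(s)=z(t)$ with $s\ne t$. Then
$$a_1(e^{is}-e^{it})=-\sum_{k\ge2}a_k\bigl(e^{im_ks}-e^{im_kt}\bigr),$$
and $|e^{im s}-e^{imt}|\le m|e^{is}-e^{it}|$ because $|\sin(mx)|\le m|\sin x|$. Dividing by $|e^{is}-e^{it}|\neq0$ yields $|a_1|\le\sum_{k\ge2}m_k|a_k|$, a contradiction. This is clean and cleanly uses the hypothesis, so I would take it as the main route for embeddedness, with $z'\ne0$ giving immersion. The Jordan curve then bounds $\Omega_z$ and has winding number $1$ about each of its points, in particular about $0$.

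For the area, I would use Green's formula with positive orientation (winding number $1$):
$$|\Omega_z|=\frac12\int_0^{2\pi}\operatorname{Im}\bigl(\bar z z'\bigr)\,ds=\frac12\int_0^{2\pi}\operatorname{Re}\sum_{j,k}\bar a_j a_k m_k e^{i(m_k-m_j)s}\,ds.$$
Since the $m_k$ are distinct, only the diagonal terms survive, giving $\pi\sum_k m_k|a_k|^2$.
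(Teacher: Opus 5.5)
Your final route is correct and is essentially the paper's proof: injectivity from $|e^{ims}-e^{imt}|\le m|e^{is}-e^{it}|$, the bound $|z'|\ge|a_1|-\sum_{k\ge2}m_k|a_k|>0$, the linear homotopy to $a_1e^{is}$ for the winding number, and Green's formula with Fourier orthogonality for the area. The star-shapedness detour can be deleted, and your orientation sentence has the logic backwards; state it instead as: since the winding number about $0$ is $1$, we get $0\in\Omega_z$, so the curve is positively oriented.
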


\begin{proof}
For every positive integer $m$, the factorization of $\xi^m-\eta^m$ gives
$$|e^{ims}-e^{imt}|\leq m|e^{is}-e^{it}|.$$
Thus, for distinct $s,t\in\T$,
$$|z(s)-z(t)| \geq \left(|a_1|-\sum_{k=2}^Nm_k|a_k|\right) |e^{is}-e^{it}|>0.$$
Moreover,
$$|z'(s)| \geq|a_1|-\sum_{k=2}^Nm_k|a_k|>0.$$
It follows that $z$ is a smooth embedding. The homotopy
$$z_t(s)=a_1e^{is}+t\sum_{k=2}^Na_ke^{im_ks},\qquad 0\leq t\leq1, $$
does not meet the origin, since
$$\sum_{k=2}^N|a_k| \leq\sum_{k=2}^Nm_k|a_k|<|a_1|.$$
Hence $z$ has winding number $1$ about the origin. Since $z$ is a Jordan curve, the origin lies in $\Omega_z$ and the parametrization is positively oriented. Green's formula and Fourier orthogonality now give
$$|\Omega_z|=\frac12\operatorname{Im}\int_0^{2\pi} \overline{z(s)}z'(s)\,ds =\pi\sum_{k=1}^Nm_k|a_k|^2.$$
\end{proof}

We also need the following topological fact. The use of coefficients
in $\Z_2$ is what allows us to treat orientable and
nonorientable fillings simultaneously.

\begin{lemma}\label{lemma 3.4}
Let $M$ be a compact connected surface with exactly one boundary component, and let $G:M\to\R^2$ be continuous. Suppose that $G|_{\partial M}$ is an embedding, and let $\Omega$ be the bounded component of $\R^2\setminus G(\partial M)$. Then
$$\Omega\subset G(M).$$
If $M$ is Riemannian and $G$ is Lipschitz, then
$$ \int_MJ_2G\,dA\geq|\Omega|. $$
\end{lemma}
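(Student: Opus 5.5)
The plan is to argue by degree theory with $\Z_2$ coefficients, and then pass from coverage to an area bound using the area formula. Fix $p\in\Omega$. Since $G|_{\partial M}$ is an embedding onto a Jordan curve $C=G(\partial M)$ and $p\notin C$, the map $G$ sends the pair $(M,\partial M)$ into $(\R^2,\R^2\setminus\{p\})$. The compact connected surface $M$, orientable or not, has a relative fundamental class $[M,\partial M]\in H_2(M,\partial M;\Z_2)\cong\Z_2$. Its boundary under the connecting map is the fundamental class $[\partial M]\in H_1(\partial M;\Z_2)$, because $\partial M$ is a single circle.

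Suppose $p\notin G(M)$. Then $G$ factors through $\R^2\setminus\{p\}$, so $G_*[M,\partial M]=0$ in $H_2(\R^2,\R^2\setminus\{p\};\Z_2)$. Apply naturality of the long exact sequence to the map of pairs. The connecting map $H_2(\R^2,\R^2\setminus\{p\};\Z_2)\to H_1(\R^2\setminus\{p\};\Z_2)$ is an isomorphism, so
\[
(G|_{\partial M})_*[\partial M]=\partial\, G_*[M,\partial M]=0 \quad\text{in } H_1(\R^2\setminus\{p\};\Z_2)\cong\Z_2.
\]
This class, however, is the mod $2$ winding number of the Jordan curve $C$ about $p$. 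By the Jordan curve theorem it equals $1$ for $p$ in the bounded component $\Omega$. An elementary way to see this is that the winding number of a Jordan curve about an interior point is $\pm1$, which is nonzero mod $2$. This contradiction gives $\Omega\subset G(M)$.

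The main point to be careful about is precisely this step. One must verify that the mod $2$ degree of a Jordan curve about interior points is $1$, and that the relative fundamental class with $\Z_2$ coefficients exists and restricts to the boundary fundamental class without any orientability assumption. Both are standard facts about compact manifolds with boundary and about the Jordan curve theorem. Using $\Z_2$ instead of $\Z$ is exactly what removes the orientability hypothesis.

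For the area inequality, assume $M$ is Riemannian and $G$ is Lipschitz. By the area formula for Lipschitz maps from a $2$-dimensional Riemannian manifold to $\R^2$ (apply it in charts and sum via a partition into measurable pieces),
\[
\int_M J_2G\,dA=\int_{\R^2}\#\bigl(G^{-1}(y)\bigr)\,dy\;\geq\;\int_{\Omega}\#\bigl(G^{-1}(y)\bigr)\,dy\;\geq\;|\Omega|,
\]
since every $y\in\Omega$ has at least one preimage by the first part. The boundary $\partial M$ has measure zero, and $C$ has Lebesgue measure zero because it is a Lipschitz curve, so neither affects these integrals.
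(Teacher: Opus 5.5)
Your proof is correct and essentially the paper's: the $\Z_2$ relative fundamental class with $\partial[M,\partial M]_2=[\partial M]_2$ contradicts the nonzero mod $2$ winding number of the Jordan curve about a point of $\Omega$, and the area formula then gives the inequality. The only cosmetic difference is that you use naturality of the connecting map for the pair $(\R^2,\R^2\setminus\{p\})$, while the paper uses exactness to get $i_*[\partial M]_2=0$ in $H_1(M;\Z_2)$ and pushes this forward along the radial projection $H_y:M\to S^1$.
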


\begin{proof}
Let $i:\partial M\hookrightarrow M$ denote the inclusion. With coefficients in $\Z_2$, the relative fundamental class satisfies
$$\partial_*[M,\partial M]_2=[\partial M]_2;$$
see \cite[Section 3.3, Exercise 31]{Hatcher02}. Exactness of the homology sequence
of the pair gives
$$i_*[\partial M]_2=0
\quad\text{in }H_1(M;\Z_2).$$
Suppose that $y\in\Omega\setminus G(M)$. Then
$$
H_y:M\longrightarrow S^1,
\qquad
H_y(x)=\frac{G(x)-y}{|G(x)-y|},
$$
is well defined. Since $G|_{\partial M}$ parametrizes a Jordan curve
enclosing $y$, the map $H_y|_{\partial M}$ has degree $\pm1$ over
$\mathbb Z$, and hence degree $1$ modulo $2$.
Therefore,
$$(H_y|_{\partial M})_*[\partial M]_2=[S^1]_2\neq0.$$
On the other hand,
$$(H_y|_{\partial M})_*[\partial M]_2=(H_y)_*i_*[\partial M]_2=0,$$
a contradiction. This proves $\Omega\subset G(M)$.

Now suppose that $G$ is Lipschitz. Since $\partial M$ has
two-dimensional measure zero, the area formula (see \cite[Theorem 3.8]{EvansGariepy2015}), applied to a measurable
partition of $M^\circ$ subordinate to coordinate charts, gives
$$\int_MJ_2G\,dA =\int_{\R^2}\#\{x\in M^\circ:G(x)=y\} \,dy. $$
Every $y\in\Omega$ has a preimage in $M^\circ$, because $\Omega\subset G(M)$ and $G(\partial M)=\partial\Omega$. Hence $\#\{x\in M^\circ:G(x)=y\}\geq1$ on $\Omega$, and the desired inequality follows.
\end{proof}

We now apply the preceding lemmas to the Fourier maps from Section 2.

\begin{proposition}\label{prop:finite-universal-bound}
Let $M$ be a compact connected Riemannian isometric filling of the circle of length $2\pi$. Then, for every $N\geq1$,
$$\operatorname{Area}(M) \geq \frac{16}{\pi}\sum_{k=1}^N\frac1{(2k-1)^3}.$$
\end{proposition}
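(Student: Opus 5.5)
We take $m_k=2k-1$ and $U_N=(u_{jk})$ as constructed above, and set $G_j=\sum_k u_{jk}F_{m_k}$ for $1\le j\le N$. Write $g_j=\sum_k u_{jk}z_{m_k}$ for the complex form of $G_j$; this is Lipschitz because each $A_n,B_n$ is. By Lemma \ref{lem:boundary-fourier}, on the boundary we have
$$g_j(\gamma(s))=\sum_{k=1}^N a_{jk}e^{im_ks},\qquad a_{jk}:=-u_{jk}\rho_{m_k}=-\frac{4u_{jk}}{\pi m_k^2}.$$

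To apply Lemma \ref{lemma 3.3} we need the domination $|a_{j1}|>\sum_{k\ge2}m_k|a_{jk}|$. After cancelling the common factor $4/\pi$, this reads $|u_{j1}|>\sum_{k\ge2}|u_{jk}|/m_k$, which is exactly Lemma \ref{lemma 3.2}. Lemma \ref{lemma 3.3} then tells us that $s\mapsto g_j(\gamma(s))$ is a smooth embedding. Since $\gamma$ is a homeomorphism onto $\partial M$, the restriction $G_j|_{\partial M}$ is an embedding. Its image bounds a region $\Omega_j$ with
$$|\Omega_j|=\pi\sum_{k=1}^N m_k|a_{jk}|^2=\frac{16}{\pi}\sum_{k=1}^N\frac{u_{jk}^2}{m_k^3}.$$

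Lemma \ref{lemma 3.4} now applies to each $G_j$, since $M$ is compact and connected with one boundary component and the restriction to $\partial M$ is an embedding. It gives $\int_M J_2G_j\,dA\ge|\Omega_j|$, with no orientability assumption. Summing over $j$ and using Proposition \ref{prop 2.5} together with the fact that $\partial M$ has measure zero, we obtain
$$\operatorname{Area}(M)\ge\int_M\sum_{j}J_2G_j\,dA\ge\sum_j|\Omega_j|=\frac{16}{\pi}\sum_{k=1}^N\frac{1}{m_k^3}\sum_j u_{jk}^2=\frac{16}{\pi}\sum_{k=1}^N\frac1{(2k-1)^3}.$$
The last equality holds because $U_N$ is orthogonal, so each column has unit norm. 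The case $N=1$ is included, with $U_1=(1)$ and Lemma \ref{lemma 3.3} applied to a single mode.

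There is no substantial obstacle, since the earlier lemmas were designed for exactly this assembly. The points to check carefully are three. First, the sign and the power of $m_k$ in the domination condition must match Lemma \ref{lemma 3.2}: the factor $m_k$ in Lemma \ref{lemma 3.3} combines with $\rho_{m_k}\propto m_k^{-2}$ to give $1/m_k$. Second, the column orthonormality of $U_N$ is what makes the cross-dependence on $U_N$ cancel in the final sum. Third, the Jacobian bound of Proposition \ref{prop 2.5} holds almost everywhere on $M^\circ$, which suffices for the integral.
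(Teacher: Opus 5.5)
Your proposal is correct and follows the paper's proof essentially step for step. It uses the same matrix $U_N$ and maps $G_j$, checks domination via $m_k\rho_{m_k}=\rho_1/m_k$ and Lemma~\ref{lemma 3.2}, gets the Jordan-curve area from Lemma~\ref{lemma 3.3} and the coverage bound from Lemma~\ref{lemma 3.4}, and sums using Proposition~\ref{prop 2.5} and the column orthonormality of $U_N$.
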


\begin{proof}
Let $m_k=2k-1$ and let $U_N=(u_{jk})$ be the orthogonal matrix constructed above. For $1\leq j\leq N$, define
$$G_j=\sum_{k=1}^Nu_{jk}F_{m_k}.$$
These maps are Lipschitz. Identifying $\R^2$ with $\C$, Lemma \ref{lem:boundary-fourier} gives
$$G_j(\gamma(s))=-\sum_{k=1}^Nu_{jk}\rho_{m_k}e^{im_ks},$$
where $\rho_m=\frac4{\pi m^2}$. Since $m_k\rho_{m_k}=\rho_1/m_k$, Lemma \ref{lemma 3.2} implies
$$\sum_{k=2}^Nm_k|u_{jk}|\rho_{m_k}=\rho_1\sum_{k=2}^N\frac{|u_{jk}|}{m_k}<|u_{j1}|\rho_1.$$
Thus Lemma \ref{lemma 3.3}, applied with $a_k=-u_{jk}\rho_{m_k}$, shows that $G_j|_{\partial M}$ is a Jordan curve. Let $\Omega_j$ denote its bounded complementary component. Lemmas \ref{lemma 3.3} and \ref{lemma 3.4} give
$$\int_MJ_2G_j\,dA \geq|\Omega_j| =\pi\sum_{k=1}^Nm_k|u_{jk}|^2\rho_{m_k}^2.$$
Summing over $j$, and using Proposition \ref{prop 2.5} and the orthogonality of $U_N$, we obtain
\begin{align*}
\operatorname{Area}(M)
&\geq\sum_{j=1}^N\int_MJ_2G_j\,dA\\
&\geq\pi\sum_{j=1}^N\sum_{k=1}^N
m_k|u_{jk}|^2\rho_{m_k}^2\\
&=\pi\sum_{k=1}^Nm_k\rho_{m_k}^2\\
&=\frac{16}{\pi}\sum_{k=1}^N\frac1{(2k-1)^3}.
\end{align*}
\end{proof}

We are ready to prove Theorem \ref{thm:main}. 
\begin{proof}[Proof of Theorem \ref{thm:main}]
Letting $N\to\infty$ in Proposition
\ref{prop:finite-universal-bound} and using
$$\sum_{k=1}^{\infty}\frac1{(2k-1)^3}=\left(1-\frac1{2^3}\right)\zeta(3)
=\frac78\zeta(3),$$
we obtain
$$\operatorname{Area}(M)\geq\frac{14\zeta(3)}{\pi}.$$
\end{proof}

\section{The Nonlinear Bound for Orientable Fillings}\label{sec:orientable-case}

Throughout this section, we assume that $M$ is orientable. We choose an orientation on $M$ so that the arclength parametrization $\gamma:\T\to\partial M$ agrees with the induced boundary orientation. We retain the distance functions and Fourier coefficients introduced in Section 2. Our purpose is to improve the universal estimate by a cubic perturbation of the Fourier map.

Set
\begin{equation}\label{eq:nonlinear-constants}
\begin{gathered}
C_0=\frac{14\zeta(3)}{\pi},\qquad
b=\frac{512}{\pi^3}\left(\frac34-\frac{\pi^2}{16}\right),\\
d=\frac{4096}{\pi^5}\left(
\frac78\zeta(3)+1-\frac{\pi^4}{48}\right),\\
C_*=\sqrt2\left[
\frac{16}{\pi^2}
+\frac8{5\pi}\sqrt{2-\frac{16}{\pi^2}}
\right],\\
D_*=\frac{32}{\pi^2},
\qquad
Q_*=\frac{1280}{9\pi^4}+\frac{128}{9\pi^2}.
\end{gathered}
\end{equation}

The following quantitative estimate implies Theorem
\ref{thm:main orientable}.

\begin{theorem}\label{thm:nonlinear-formula}
Let $M$ be a compact connected orientable Riemannian isometric filling of $\T$. For every $0\leq\lambda<\pi^2/32$, we have
\begin{equation}\label{eq:nonlinear-master}
\operatorname{Area}(M)\geq
\frac{C_0+b\lambda+d\lambda^2}
{1+\lambda^2\left[
Q_*+\dfrac{C_*^2}{4(1-D_*\lambda)}
\right]}.
\end{equation}
In particular,
$$\operatorname{Area}(M)>5.40154.$$
\end{theorem}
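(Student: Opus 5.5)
We propose to prove Theorem \ref{thm:nonlinear-formula} by a calibration (Stokes) argument. The linear bound of Section \ref{sec:universal-proof} can itself be recast this way. Set $\omega_0:=\sum_{k\ge1}\frac{1}{m_k}\,dA_{m_k}\wedge dB_{m_k}$ up to normalization, or more precisely the pullback of $\sum_k dx_k\wedge dy_k$ under the map $x\mapsto(F_{m_k}(x))_k$ weighted appropriately. Proposition \ref{prop 2.5} says that its comass is at most $1$, while Lemma \ref{lem:boundary-fourier} computes $\int_{\partial M}$ of a primitive as $\pi\sum_k m_k\rho_{m_k}^2=C_0$. For orientable $M$ we then have $\operatorname{Area}(M)\ge\int_M\omega/\|\omega\|_{\mathrm{comass}}$ for any such form $\omega=d\alpha$.

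The plan is to perturb the primitive by a cubic resonant term. The natural candidates are $\alpha_\lambda=\alpha_0+\lambda\,\beta$, where $\beta$ couples modes whose frequencies satisfy $m_1+m_1+m_1=m_3$ or $m_j+m_k-m_l=m$, for instance $\beta=\operatorname{Im}(\bar z_1^{\,3} dz_3)$ and its analogues $\operatorname{Im}(z_1^2\bar z_1\,dz_1)$. Each is normalized so that its boundary integral is nonzero, which holds because $z_n(\gamma(s))=-\rho_ne^{ins}$ and so products of boundary values with matching total frequency survive integration over $\T$.

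The first step is to compute, via Lemma \ref{lem:boundary-fourier}, the boundary action
\[\int_{\partial M}\alpha_\lambda=C_0+b\lambda+d\lambda^2.\]
Here the constants $b,d$ arise from explicit sums such as $\sum 1/m^2$ and $\sum 1/m^4$ over odd $m$, which explains the appearance of $\pi^2/16$, $\pi^4/48$ and $\zeta(3)$.

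The second step is to bound the pointwise comass of $d\alpha_\lambda$ at a.e.\ $x\in E$ (Lemma \ref{lemma 2.4}). Write $d\alpha_\lambda=\omega_0+\lambda\,\omega_1$, where $\omega_1$ has coefficients that are quadratic in the bounded quantities $z_n(x)$, with $|z_n|\le$ explicit constants since $|a_\theta|\le\pi/2$, multiplied by the differentials $dF_n$. We will estimate $\omega_0(e_1,e_2)+\lambda\,\omega_1(e_1,e_2)$ by splitting $dF$ into its component along the extremal set and its orthogonal component. Equality in the chain of inequalities of Proposition \ref{prop 2.5} forces $dG_j(e_1)\perp dG_j(e_2)$ with equal lengths and all the energy of $h_\alpha$ in the chosen modes. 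If $\delta^2$ denotes the deficit $1-\omega_0(e_1,e_2)$, the perturbation is bounded by $\lambda(C_*\delta+\text{const})$ plus a quadratic term $D_*\lambda$ times the perturbation size.

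Optimizing the resulting expression $1-\delta^2+\lambda C_*\delta+\lambda^2(\ldots)$ over $\delta$ by completing the square gives the comass bound
\[1+\lambda^2\Bigl[Q_*+\frac{C_*^2}{4(1-D_*\lambda)}\Bigr],\]
valid for $\lambda<1/D_*=\pi^2/32$. Stokes' theorem, applied on the orientable $M$ with Lipschitz $\alpha_\lambda$ (justified by approximation or by the area formula), then yields \eqref{eq:nonlinear-master}. Finally, a numerical choice of $\lambda$, near the maximizer of the rational function, gives $>5.40154$.

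We expect the main obstacle to be the second step. The comass estimate must be only $1+O(\lambda^2)$ rather than $1+O(\lambda)$, which requires showing that the first-order variation $\omega_1$ vanishes on the tangent planes where $\omega_0$ attains comass $1$. Bessel's inequality must also be used quantitatively, via the deficit $\delta$, so that the linear term is absorbed; getting explicit constants like $C_*$ requires careful use of $\sum_{m\text{ odd}}1/m^2=\pi^2/8$ to bound the sup-norms of the $z_n$.
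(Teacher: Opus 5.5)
Your overall architecture is right: Stokes with an exact pulled-back form, a first variation that must vanish on comass-one planes, and completing the square in the deficit. But there is a genuine gap. You never specify a perturbation that works, and the candidates you offer fail.

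\textbf{The setup and the candidates.} Perturbing the primitive linearly, $\alpha_\lambda=\alpha_0+\lambda\beta$, makes the boundary action affine in $\lambda$ and leaves no $\lambda^2$ term in $d\alpha_\lambda$. In the statement, the $d\lambda^2$ in the numerator and the $Q_*\lambda^2$ in the denominator come from pulling back $\omega(p,q)=\operatorname{Im}\sum_n\overline{p_n}q_n$ under $\Psi_\lambda=(\mathrm{Id}+\lambda h)\circ\Phi$. Here $\Phi=(z_n)_{n\ \mathrm{odd}}$, $h(z)=\overline{z_1}^{\,2}Sz$, and $S$ is the shift $(Sz)_n=z_{n+2}$, so $\Omega_\lambda=\Omega_0+\lambda L+\lambda^2Q$. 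This perturbation couples \emph{every} mode $n$ to $n+2$. The boundary coefficients become $\rho_n+\lambda\rho_1^2\rho_{n+2}$, and $b=\frac{512}{\pi^3}\sum_n\frac{1}{n(n+2)^2}$, which no single-pair coupling produces.

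Your candidates do not work:
\begin{itemize}
\item $\operatorname{Im}(z_1^2\overline{z_1}\,dz_1)=|z_1|^2\operatorname{Im}(z_1\,dz_1)$ has boundary frequency $2$, so its boundary integral is zero.
\item For $\beta=\operatorname{Im}(\overline{z_1}^{\,3}dz_3)$, take a frame with $\Omega_0=1$. Writing $W=da_\theta(e_1)+i\,da_\theta(e_2)=\sum c_ke^{ik\theta}$, this means $c_{-n}=0$ for all $n$ and $|W|=1$ a.e. There one computes $d\beta(e_1,e_2)=6\operatorname{Re}(\overline{z_1}^{\,2}c_1\overline{c_3})$. This is nonzero for, e.g., $W=e^{i\theta}(e^{2i\theta}-a)/(1-\bar a e^{2i\theta})$, where $c_1=-a$ and $c_3=1-|a|^2$. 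Since the first variation does not vanish on comass-one planes, your comass bound is only $1+O(\lambda)$, and no improvement over $C_0$ follows.
\end{itemize}

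\textbf{The missing mechanism.} You name the needed cancellation but give no reason it holds. With $h=\overline{z_1}^{\,2}Sz$ one gets
$L=2\operatorname{Re}\bigl[z_1^2\sum_n(\overline{c_n}c_{n+2}-c_{-n}\overline{c_{-(n+2)}})\bigr]+4\operatorname{Re}\bigl[z_1\sum_n\overline{z_{n+2}}(c_{-1}\overline{c_n}-\overline{c_1}c_{-n})\bigr]$.
On comass-one planes the first sum is the lag-two autocorrelation $\sum_k\overline{c_k}c_{k+2}$. That is a Fourier coefficient of $|W|^2\equiv1$, hence zero. The second sum vanishes because $c_{-n}=0$.

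The quantitative version uses $1-|W|^2\ge 0$ to get $|\sum_k\overline{c_k}c_{k+2}|\le 1-P-N$, together with $N\le\delta/2$. It also uses $|\sum_n\overline{z_{n+2}}v_n|\le Z\sqrt{PN}$, which relies on $v_1=0$. Together these give $|L|\le C_*\sqrt\delta+D_*\delta$ with $\delta=1-|\Omega_0|$. There is no constant term. Your ``$\lambda(C_*\delta+\text{const})$'' would destroy the $O(\lambda^2)$ comass bound.

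The right object is the signed density $\Omega_0=\sum_n(|c_n|^2-|c_{-n}|^2)$, not the equality case of the unsigned Jacobian sum in Proposition~\ref{prop 2.5}. Finally, the constants $C_*,D_*,Q_*$ do not follow from $|a_\theta|\le\pi/2$, which only gives $\|\Phi\|_H^2\le\pi^2/2$. They need $|z_1|\le 4/\pi$ and the weighted Parseval bound $\sum_n n^2|z_n|^2\le 2$, which comes from $|\partial_\theta a_\theta|\le 1$.
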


The numerator in \eqref{eq:nonlinear-master} is the boundary action of the perturbed Fourier map, while the denominator is an upper bound for the comass of its pullback form. The key observation is that the first variation of this form vanishes on every tangent plane where the unperturbed form has comass one. We first obtain a quantitative version of this observation and then combine it with the boundary action.

\subsection{The Fourier map and the cubic perturbation}

Let $\mathcal O=\{1,3,5,\ldots\}$ and let $H=\ell^2(\mathcal O;\C)$, regarded as a real Hilbert space. Define $\Phi:M\longrightarrow H$ by 
$$\Phi(x)=(z_n(x))_{n\in\mathcal O}.$$
On $H$, consider the standard symplectic form
$$\omega(p,q)=\operatorname{Im}\sum_{n\in\mathcal O}\overline{p_n}q_n.$$
Its standard primitive is $\alpha_z(v)=\frac12\omega(z,v),$ so that $d\alpha=\omega$.

\begin{lemma}\label{lem:orientable-fourier-bounds}
The map $\Phi$ is Lipschitz and differentiable almost everywhere. At almost every $x\in M^\circ$, for every orthonormal basis $(e_1,e_2)$ of $T_xM$, we have
\begin{equation}\label{eq:orientable-fourier-energy}
\|d\Phi(e_1)\|_H^2+\|d\Phi(e_2)\|_H^2
=\frac1\pi\int_0^{2\pi}|\nabla a_\theta|^2\,d\theta
\leq2.
\end{equation}
Moreover, writing
$$r=|z_1(x)|,
\qquad
Y^2=\sum_{n\in\mathcal O}|z_{n+2}(x)|^2,
\qquad
Z^2=\sum_{\substack{m\in\mathcal O\\m\geq5}}|z_m(x)|^2,
$$
we have, at every $x\in M$,
\begin{equation}\label{eq:orientable-coordinate-bounds}
r\leq\frac4\pi,
\qquad
r^2+9Y^2\leq2,
\qquad
r^2+25Z^2\leq2.
\end{equation}
\end{lemma}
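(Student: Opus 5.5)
The plan is to treat $\theta\mapsto a_\theta(x)$ as an element of $L^2(\T)$ and to obtain every assertion from Parseval's identity, not merely Bessel's inequality. Since $a_{\theta+\pi}=-a_\theta$, the function $a_\cdot(x)$ has vanishing mean and vanishing even Fourier coefficients. Hence $\{\cos(n\theta)/\sqrt\pi,\sin(n\theta)/\sqrt\pi\}_{n\in\mathcal O}$ is a complete orthonormal system for the closed subspace of antipodally odd functions, and
$$\frac1\pi\int_0^{2\pi}|g(\theta)|^2\,d\theta=\sum_{n\in\mathcal O}|\hat g_n|^2$$
for every such $g$, where $\hat g_n=\frac1\pi\int g(\theta)e^{in\theta}\,d\theta$ is the complex coefficient matching the convention $z_n=A_n+iB_n$.

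\emph{Lipschitz and differentiability.} I apply Parseval to $g=a_\cdot(x)-a_\cdot(y)$. Using \eqref{eq 2.1} with $\theta=\varphi$, this gives
$$\|\Phi(x)-\Phi(y)\|_H^2\le 2\,d_M(x,y)^2,$$
so $\Phi$ is $\sqrt2$-Lipschitz.

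For differentiability, fix $x\in E$ from Lemma \ref{lemma 2.4} and work in a chart. I define the candidate differential $L(v)$ as the sequence of coefficients of the odd function $\theta\mapsto da_\theta(x)(v)$. By Parseval,
$$\frac{\|\Phi(x+h)-\Phi(x)-L h\|_H^2}{|h|^2}=\frac1\pi\int_0^{2\pi}|R_\theta(h)|^2\,d\theta .$$
Here $R_\theta$ is the remainder from the proof of Lemma \ref{lemma 2.4}. It tends to $0$ for almost every $\theta$ and is bounded by $2L$, so dominated convergence sends the right side to $0$. Thus $\Phi$ is Fréchet differentiable into $H$ on $E$, with $d\Phi(v)$ equal to the Fourier coefficients of $da_\theta(v)$.

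\emph{Energy identity.} I then apply Parseval to $h_\alpha(\theta)=da_\theta(e_\alpha)$ for $\alpha=1,2$ and add the two identities. This yields
$$\|d\Phi(e_1)\|_H^2+\|d\Phi(e_2)\|_H^2=\frac1\pi\int_0^{2\pi}|\nabla a_\theta|^2\,d\theta .$$
By \eqref{eq 2.5}, $|\nabla a_\theta|^2\le1$ for almost every $\theta$ at $x\in E$, so the right side is at most $2$. This proves \eqref{eq:orientable-fourier-energy}.

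\emph{Pointwise bounds.} For each fixed $x\in M$, the function $\theta\mapsto a_\theta(x)$ is $1$-Lipschitz and antipodally odd. Its derivative $\partial_\theta a_\theta(x)$ is therefore also odd, bounded by $1$ almost everywhere, and has $n$-th coefficient $\mp in\,z_n(x)$ after integrating by parts, which is justified by absolute continuity. Parseval then gives
$$\sum_{n\in\mathcal O}n^2|z_n(x)|^2=\frac1\pi\int_0^{2\pi}|\partial_\theta a_\theta(x)|^2\,d\theta\le2 .$$
The sum $Y^2$ is exactly $\sum_{n\ge3}|z_n|^2$, and $n^2\ge9$ for those $n$, so $r^2+9Y^2\le2$. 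In the same way, $n^2\ge25$ for $n\ge5$ gives $r^2+25Z^2\le2$.

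For $r\le4/\pi$, I choose $\varphi$ with $|z_1|=\operatorname{Re}(e^{-i\varphi}z_1)$, which gives
$$|z_1|=\frac1\pi\int_0^{2\pi}a_\theta\cos(\theta-\varphi)\,d\theta .$$
Integrating by parts (periodicity kills the boundary term) turns this into
$$-\frac1\pi\int_0^{2\pi}\partial_\theta a_\theta\,\sin(\theta-\varphi)\,d\theta\le\frac1\pi\int_0^{2\pi}|\sin(\theta-\varphi)|\,d\theta=\frac4\pi .$$
This bound is sharp on the boundary by Lemma \ref{lem:boundary-fourier}.

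The only genuinely delicate step is upgrading componentwise differentiability to Fréchet differentiability in the infinite-dimensional space $H$. This is exactly why I use Parseval together with dominated convergence on the full remainder $R_\theta$, rather than differentiating coordinate by coordinate. The remaining work is bookkeeping of the Fourier conventions and signs.
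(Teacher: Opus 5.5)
Your proposal is correct and follows the paper's proof essentially step for step. Like the paper, you apply Parseval on the antipodally odd functions $a_\theta(x)-a_\theta(y)$, $R_\theta$, and $da_\theta(e_\alpha)$, then use the weighted identity for $\partial_\theta a_\theta(x)$ and an integration by parts for $r\le 4/\pi$; the only cosmetic difference is that you rotate $z_1$ before integrating by parts (bounding by $\int_0^{2\pi}|\sin(\theta-\varphi)|\,d\theta$), whereas the paper integrates by parts first and then rotates (bounding by $\int_0^{2\pi}|\cos(\theta-\varphi)|\,d\theta$).
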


\begin{proof}
Since $a_\theta(x)-a_\theta(y)$ is real and antipodally odd as a function of $\theta$, Parseval's identity and the Lipschitz estimate for $a_\theta$ give
$$\|\Phi(x)-\Phi(y)\|_H^2=\frac1\pi\int_0^{2\pi}
|a_\theta(x)-a_\theta(y)|^2\,d\theta
\leq2d_M(x,y)^2.$$
Thus $\Phi$ is Lipschitz.

Let $x$ belong to the full-measure set in Lemma \ref{lemma 2.4}, and work in a coordinate chart centered at $x$. Define $\mathcal D_x:T_xM\to H$ by
$$(\mathcal D_xv)_n
=\frac1\pi\int_0^{2\pi}
da_\theta(x)(v)e^{in\theta}\,d\theta,
\qquad n\in\mathcal O.$$
Parseval's identity shows that $\mathcal D_xv\in H$. For a sufficiently small coordinate vector $v$, set
$$R_\theta(v)=a_\theta(x+v)-a_\theta(x)-da_\theta(x)(v).$$
For almost every $\theta$, we have $R_\theta(v)=o(|v|)$ as $v\to0$, whereas $|R_\theta(v)|/|v|$ is uniformly bounded. The function $\theta\mapsto R_\theta(v)$ is again real and antipodally odd. Hence Parseval's identity and dominated convergence give
$$\frac{\|\Phi(x+v)-\Phi(x)-\mathcal D_xv\|_H^2}{|v|^2}
=\frac1\pi\int_0^{2\pi}\frac{|R_\theta(v)|^2}{|v|^2}\,d\theta\longrightarrow0.$$
Therefore, $d\Phi_x=\mathcal D_x$. Another application of Parseval's identity gives
$$\|d\Phi(e_1)\|_H^2+\|d\Phi(e_2)\|_H^2=\frac1\pi\int_0^{2\pi}
\left(|da_\theta(e_1)|^2+|da_\theta(e_2)|^2\right)\,d\theta.$$
The first assertion now follows from \eqref{eq 2.5}.

For every fixed $x\in M$, the function $\theta\mapsto a_\theta(x)$ is absolutely continuous, real, and antipodally odd. Since $|\partial_\theta a_\theta(x)|\leq1$ almost everywhere, the weighted form of Parseval's identity gives
$$\sum_{n\in\mathcal O}n^2|z_n(x)|^2
=\frac1\pi\int_0^{2\pi}
|\partial_\theta a_\theta(x)|^2\,d\theta
\leq2.$$
This proves the last two inequalities in
\eqref{eq:orientable-coordinate-bounds}. Finally, integration by parts gives
$$z_1(x)=-\frac1{i\pi}\int_0^{2\pi}
\partial_\theta a_\theta(x)e^{i\theta}\,d\theta.$$
Choosing $\varphi\in\R$ so that the rotated integral is real and nonnegative, we obtain
$$\pi|z_1(x)|
=\int_0^{2\pi}
\partial_\theta a_\theta(x)
\cos(\theta-\varphi)\,d\theta
\leq\int_0^{2\pi}|\cos(\theta-\varphi)|\,d\theta
=4.$$
This completes the proof.
\end{proof}

Let $S:H\to H$ be the shift defined by
$$
(Sz)_n=z_{n+2},
\qquad n\in\mathcal O.
$$
For $\lambda\geq0$, define
\begin{equation}\label{eq:cubic-fourier-perturbation}
h(z)=\overline{z_1}^{\,2}Sz,
\qquad
G_\lambda(z)=z+\lambda h(z),
\qquad
\Psi_\lambda=G_\lambda\circ\Phi.
\end{equation}
The map $h$ is a smooth real polynomial map, and
\begin{equation}\label{eq:cubic-fourier-derivative}
Dh_z(v)=2\overline{z_1}\,\overline{v_1}\,Sz
+\overline{z_1}^{\,2}Sv.
\end{equation}
In particular,
$$\|Dh_z(v)\|_H\leq3R^2\|v\|_H$$
whenever $\|z\|_H\leq R$. Since $\Phi(M)$ is bounded, this shows that $h$ is Lipschitz on a ball containing $\Phi(M)$, and hence $\Psi_\lambda$ is Lipschitz.

The choice of \eqref{eq:cubic-fourier-perturbation} is motivated by  the boundary identity
$$\overline{z_1(\gamma(s))}^{\,2}z_{n+2}(\gamma(s))=-\rho_1^2\rho_{n+2}e^{ins},$$
where $\rho_n=\frac4{\pi n^2}$. Consequently,
\begin{equation}\label{eq:cubic-fourier-boundary}
(\Psi_\lambda)_n(\gamma(s))
=-\left(\rho_n+\lambda\rho_1^2\rho_{n+2}\right)e^{ins}.
\end{equation}
Define the almost-everywhere pullback form
$$\Omega_\lambda=\Psi_\lambda^*\omega.$$
For this form, the comass means the essential supremum of its absolute value on orthonormal frames in $TM$.

\begin{proposition}\label{prop:nonlinear-boundary-action}
For every $\lambda\geq0$, we have
\begin{equation}\label{eq:nonlinear-boundary-action}
\int_M\Omega_\lambda
=B(\lambda)
:=\pi\sum_{n\in\mathcal O}
n\left(\rho_n+\lambda\rho_1^2\rho_{n+2}\right)^2
=C_0+b\lambda+d\lambda^2.
\end{equation}
In particular, $b>0$.
\end{proposition}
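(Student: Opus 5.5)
We prove the statement in three steps: Stokes' theorem, a boundary computation, and an explicit evaluation of the series.

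\textbf{Stokes.} Since $\omega=d\alpha$ with $\alpha_z(v)=\frac12\omega(z,v)$, we expect
\[
\int_M\Omega_\lambda=\int_{\partial M}\Psi_\lambda^*\alpha .
\]
Two points need care. First, $\Psi_\lambda$ is only Lipschitz and takes values in the infinite-dimensional space $H$. Second, $M$ is oriented so that $\gamma$ is positively oriented on $\partial M$.

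To handle the first point, truncate to finitely many modes. Let $P_K$ be the orthogonal projection onto the modes $n\le K$. Then $P_K\Psi_\lambda$ is a Lipschitz map into a finite-dimensional space. For Lipschitz maps into $\R^{2K}$, Stokes for the pullback of an exact polynomial form holds: mollify the map, apply smooth Stokes, and pass to the limit. The Jacobian-type integrands converge in $L^1$ by dominated convergence, because the derivatives converge a.e. with uniform bounds.

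To let $K\to\infty$, we control the tails. The tail contribution $\sum_{n>K}\operatorname{Im}\overline{d(\Psi_\lambda)_n(e_1)}\,d(\Psi_\lambda)_n(e_2)$ is dominated by $\|(I-P_K)d\Psi_\lambda\|^2$. This tends to $0$ pointwise a.e., and it is bounded by the energy bound \eqref{eq:orientable-fourier-energy} together with $\|Dh\|\le 3R^2$. Dominated convergence therefore applies. On the boundary side, the traces are explicit, so the limit is immediate. This approximation step is the main technical obstacle; everything else is computation.

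\textbf{Boundary integral.} By \eqref{eq:cubic-fourier-boundary}, write $(\Psi_\lambda)_n(\gamma(s))=-\beta_ne^{ins}$ with $\beta_n=\rho_n+\lambda\rho_1^2\rho_{n+2}$. Then
\[
\tfrac12\operatorname{Im}\,\overline{\Psi_n}\,\tfrac{d}{ds}\Psi_n=\tfrac12 n\beta_n^2 .
\]
Integrating over $[0,2\pi]$ and summing in $n$ gives
\[
B(\lambda)=\pi\sum_{n\in\mathcal O} n\beta_n^2 .
\]

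\textbf{Evaluating the series.} Expand $B(\lambda)$ in powers of $\lambda$.

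The constant term is
\[
\pi\sum_n n\rho_n^2=\frac{16}{\pi}\sum_{n\in\mathcal O}\frac1{n^3}=C_0 .
\]

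The linear term is $2\pi\rho_1^2\sum_n n\rho_n\rho_{n+2}$. With $\rho_1=4/\pi$ this equals
\[
\frac{2048}{\pi^3}\sum_{n\in\mathcal O}\frac{1}{n(n+2)^2}.
\]
By partial fractions,
\[
\frac1{n(n+2)^2}=\frac14\Bigl(\frac1n-\frac1{n+2}\Bigr)-\frac1{2(n+2)^2}.
\]
The first part telescopes to $\frac14$. The second part gives $\frac12\bigl(\frac{\pi^2}{8}-1\bigr)$. Hence the sum is $\frac34-\frac{\pi^2}{16}$, which gives $b$. Moreover $b>0$ because $\pi^2<12$.

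The quadratic term is
\[
\pi\rho_1^4\sum_n n\rho_{n+2}^2=\frac{4096}{\pi^5}\sum_{n\in\mathcal O}\frac{n}{(n+2)^4}.
\]
Substitute $m=n+2$, so $m$ runs over odd integers $\ge 3$. Then
\[
\sum_{m\ge 3}\Bigl(\frac1{m^3}-\frac2{m^4}\Bigr)=\Bigl(\tfrac78\zeta(3)-1\Bigr)-2\Bigl(\tfrac{\pi^4}{96}-1\Bigr)=\tfrac78\zeta(3)+1-\tfrac{\pi^4}{48},
\]
which gives $d$.
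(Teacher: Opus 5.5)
Your proposal follows the paper's proof essentially step by step. You truncate by $P_K$, prove a finite-dimensional Lipschitz Stokes formula, and pass to the limit by dominated convergence using \eqref{eq:orientable-fourier-energy} and $\|Dh_z\|\le 3R^2$. You then evaluate the explicit boundary traces and use the same partial-fraction and substitution identities.

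Two small points need fixing.

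\textbf{The linear coefficient.} It should be
\[
2\pi\rho_1^2\cdot\frac{16}{\pi^2}=\frac{512}{\pi^3},
\]
not $\frac{2048}{\pi^3}$. As written, your display yields $4b$ rather than $b$. The sign, and hence $b>0$, is unaffected.

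\textbf{The boundary term in the mollification step.} Almost-everywhere convergence of the differentials in $M$ says nothing about tangential derivatives on the null set $\partial M$, so the boundary integral needs its own argument. There are two ways to supply it.
\begin{itemize}
\item After extending the map past $\partial M$ and mollifying, the mollified traces $\Psi^\epsilon$ converge uniformly to the trace $\Psi$, with bounded tangential derivatives. Write $\omega(\Psi^\epsilon,\partial_s\Psi^\epsilon)=\omega(\Psi^\epsilon-\Psi,\partial_s\Psi^\epsilon)+\omega(\Psi,\partial_s\Psi^\epsilon)$. The first term tends to $0$ uniformly. In the second, integrate by parts on $\partial M$ against the Lipschitz limit trace.
\item Alternatively, use the paper's device. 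Subtract a smooth extension of the (already smooth) boundary trace of $P_K\Psi_\lambda$, and approximate the zero-trace remainder in $W_0^{1,2}$ by compactly supported smooth maps. This keeps the boundary data fixed, so only interior $L^1$ convergence of the quadratic integrands is needed.
\end{itemize}
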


\begin{proof}
Let $P_N$ denote the orthogonal projection onto the span of the first $N$ odd coordinate vectors. The map $P_N\Psi_\lambda$ is finite-dimensional and Lipschitz, and its boundary trace is smooth. The finite-dimensional Stokes formula gives
\begin{align*}
\int_M(P_N\Psi_\lambda)^*\omega
&=\int_{\partial M}(P_N\Psi_\lambda)^*\alpha\\
&=\pi\sum_{\substack{n\in\mathcal O\\n\leq2N-1}}
n\left(\rho_n+\lambda\rho_1^2\rho_{n+2}\right)^2.
\end{align*}
Indeed, the Stokes formula for a Lipschitz map follows by subtracting a smooth extension of its boundary trace and approximating the resulting zero-trace map in $W_0^{1,2}$ by smooth compactly supported maps. The products of the first derivatives then converge in $L^1$.

Choose $R\geq\sup_M\|\Phi\|_H$ and put $K_\lambda=1+3\lambda R^2$. By \eqref{eq:orientable-fourier-energy},
$$\|d\Psi_\lambda(e_1)\|_H^2+\|d\Psi_\lambda(e_2)\|_H^2\leq2K_\lambda^2$$
at almost every point. Therefore,
$$\left|(P_N\Psi_\lambda)^*\omega(e_1,e_2)\right|\leq\frac12\left(\|d\Psi_\lambda(e_1)\|_H^2
+\|d\Psi_\lambda(e_2)\|_H^2\right)\leq K_\lambda^2.$$
Moreover, $P_Nd\Psi_\lambda(e_\alpha)\to d\Psi_\lambda(e_\alpha)$ in $H$ for $\alpha=1,2$ at almost every point. Dominated convergence thus applies to the interior integrals. Since
$$\rho_n+\lambda\rho_1^2\rho_{n+2}=O(n^{-2}),$$
the summands in the boundary action are $O(n^{-3})$. Passing to the limit proves the first equality in \eqref{eq:nonlinear-boundary-action}.

Expanding the square, we obtain
\begin{align*}
B(\lambda)
={}&\pi\sum_{n\in\mathcal O}n\rho_n^2
+2\pi\lambda\rho_1^2
\sum_{n\in\mathcal O}n\rho_n\rho_{n+2}\\
&+\pi\lambda^2\rho_1^4
\sum_{n\in\mathcal O}n\rho_{n+2}^2.
\end{align*}
The constant term is
$$\frac{16}{\pi}\sum_{n\in\mathcal O}\frac1{n^3}=\frac{14\zeta(3)}\pi=C_0.$$
The remaining coefficients follow from
$$\sum_{n\in\mathcal O}\frac1{n(n+2)^2}=\frac34-\frac{\pi^2}{16}$$
and
$$\sum_{n\in\mathcal O}\frac{n}{(n+2)^4}=\frac78\zeta(3)+1-\frac{\pi^4}{48}.$$
For the first identity, we use
$$\frac1{n(n+2)^2}=\frac1{4n}-\frac1{4(n+2)}-\frac1{2(n+2)^2}.$$
For the second identity, substitute $m=n+2$ and write $n/(n+2)^4=m^{-3}-2m^{-4}$. This proves \eqref{eq:nonlinear-boundary-action}. Positivity of $b$ also follows
directly from its defining series.
\end{proof}

\subsection{The first variation}

Let $x\in M^\circ$ belong to the common full-measure set on which the conclusions of Lemmas \ref{lemma 2.4} and \ref{lem:orientable-fourier-bounds} hold. Fix an ordered orthonormal basis $(e_1,e_2)$ of $T_xM$ and write
$$W(\theta)=da_\theta(e_1)+i\,da_\theta(e_2)=\sum_{k\in2\Z+1}c_ke^{ik\theta},$$
where
$$c_k=\frac1{2\pi}\int_0^{2\pi} W(\theta)e^{-ik\theta}\,d\theta.$$
The series is understood in $L^2$. Since $|W|=|\nabla a_\theta|\leq1$ almost everywhere, Parseval's identity gives
\begin{equation}\label{eq:tangent-fourier-energy}
\sum_{k\in2\Z+1}|c_k|^2=\frac1{2\pi}\int_0^{2\pi}|W(\theta)|^2\,d\theta \leq1.
\end{equation}
Set
$$p_n=dz_n(e_1), \qquad q_n=dz_n(e_2), \qquad n\in\mathcal O.$$
The Fourier convention in Section 2 gives
\begin{equation}\label{eq:tangent-complex-coordinates}
p_n=c_{-n}+\overline{c_n},
\qquad
q_n=i\left(\overline{c_n}-c_{-n}\right).
\end{equation}
Consequently,
\begin{equation}\label{eq:unperturbed-density}
\Omega_0(e_1,e_2)=\sum_{n\in\mathcal O}\left(|c_n|^2-|c_{-n}|^2\right).
\end{equation}
In particular, $\|\Omega_0\|_{\mathrm{comass}}\leq1$. Together with Proposition \ref{prop:nonlinear-boundary-action} at $\lambda=0$, this also recovers the universal estimate for orientable fillings.

Since $G_\lambda=\operatorname{Id}+\lambda h$, we have
\begin{equation}\label{eq:nonlinear-form-expansion}
\Omega_\lambda=\Omega_0+\lambda L+\lambda^2Q.
\end{equation}
More precisely, if $z=\Phi(x)$, $p=d\Phi(e_1)$, and $q=d\Phi(e_2)$, then
$$L=\omega(Dh_z(p),q)+\omega(p,Dh_z(q)), \qquad Q=\omega(Dh_z(p),Dh_z(q)).$$
When the frame is suppressed, $\Omega_0$, $L$, and $Q$ denote their evaluations on $(e_1,e_2)$.

\begin{lemma}\label{lem:nonlinear-first-variation}
Writing $a=z_1$ and $y_n=z_{n+2}$, we have
\begin{equation}\label{eq:nonlinear-first-variation}
\begin{aligned}
L={}&2\operatorname{Re}\left[a^2\sum_{n\in\mathcal O}\left(\overline{c_n}c_{n+2}-c_{-n}\overline{c_{-(n+2)}}\right)\right]\\
&+4\operatorname{Re}\left[a\sum_{n\in\mathcal O}\overline{y_n}
\left(c_{-1}\overline{c_n}-\overline{c_1}c_{-n}\right)\right].
\end{aligned}
\end{equation}
In particular, $L=0$ whenever $|\Omega_0|=1$.
\end{lemma}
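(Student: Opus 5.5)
The plan is to prove \eqref{eq:nonlinear-first-variation} by direct substitution, and then to deduce the vanishing statement from the rigidity forced by equality in \eqref{eq:tangent-fourier-energy}.

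\emph{Step 1: expand $L$.} By \eqref{eq:cubic-fourier-derivative}, with $a=z_1$ and $y=Sz$,
\[
Dh_z(p)=2\overline{a}\,\overline{p_1}\,y+\overline{a}^{\,2}Sp ,
\qquad
Dh_z(q)=2\overline{a}\,\overline{q_1}\,y+\overline{a}^{\,2}Sq .
\]
Insert these into $L=\omega(Dh_z(p),q)+\omega(p,Dh_z(q))$, with $\omega(u,v)=\operatorname{Im}\sum_n\overline{u_n}v_n$. This splits $L$ into two groups.
\begin{itemize}
\item The $\overline{a}^{\,2}S$-terms give
\[
\operatorname{Im}\sum_{n\in\mathcal O}\left(a^2\,\overline{p_{n+2}}\,q_n+\overline{a}^{\,2}\,\overline{p_n}\,q_{n+2}\right).
\]
\item The $y$-terms give
\[
\operatorname{Im}\sum_{n\in\mathcal O}\left(2a\,p_1\,\overline{y_n}\,q_n+2\overline{a}\,\overline{q_1}\,\overline{p_n}\,y_n\right).
\]
\end{itemize}
Next substitute \eqref{eq:tangent-complex-coordinates}, namely $p_n=c_{-n}+\overline{c_n}$ and $q_n=i(\overline{c_n}-c_{-n})$. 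Expand the products, use $\operatorname{Im}(iw)=\operatorname{Re}w$, and pair each term with its conjugate partner.
\begin{itemize}
\item In the first group, the cross terms mixing $c_{n}$ with $c_{-(n+2)}$ should cancel, and the rest should collapse to $2\operatorname{Re}\big[a^2\sum_n(\overline{c_n}c_{n+2}-c_{-n}\overline{c_{-(n+2)}})\big]$.
\item In the second group, $p_1$ and $\overline{q_1}$ combine into $c_{-1}$ and $\overline{c_1}$. It should reduce to $4\operatorname{Re}\big[a\sum_n\overline{y_n}(c_{-1}\overline{c_n}-\overline{c_1}c_{-n})\big]$.
\end{itemize}
This is bookkeeping, done in the same way as the derivation of \eqref{eq:unperturbed-density}. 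The only care needed is with signs and conjugations.

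\emph{Step 2: the equality case.} Suppose $|\Omega_0|=1$, say $\Omega_0=1$ (the case $-1$ is symmetric under $c_k\leftrightarrow \overline{c_{-k}}$, or equivalently under swapping $e_1,e_2$). By \eqref{eq:unperturbed-density} and \eqref{eq:tangent-fourier-energy},
\[
1=\sum_{n\in\mathcal O}|c_n|^2-\sum_{n\in\mathcal O}|c_{-n}|^2\le\sum_{k}|c_k|^2\le1 .
\]
Hence $c_{-n}=0$ for every $n\in\mathcal O$, and $\frac1{2\pi}\int_0^{2\pi}|W|^2\,d\theta=1$. Since $|W|\le1$ almost everywhere, this forces $|W(\theta)|=1$ for almost every $\theta$.

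With $c_{-n}=0$ for all $n\in\mathcal O$, the second line of \eqref{eq:nonlinear-first-variation} vanishes outright, because each of its terms contains $c_{-1}$ or $c_{-n}$. In the first line, the sum $\sum_{n\in\mathcal O}\overline{c_n}c_{n+2}$ can be extended to the full autocorrelation $\sum_{k\in2\Z+1}\overline{c_k}c_{k+2}$: the only extra pair with both indices nonzero-coefficient-free is $k=-1$, where $c_{-1}=0$, and all other extra terms involve some $c_{-n}=0$. By Parseval,
\[
\sum_{k\in2\Z+1}\overline{c_k}c_{k+2}=\frac1{2\pi}\int_0^{2\pi}|W(\theta)|^2e^{-2i\theta}\,d\theta=\frac1{2\pi}\int_0^{2\pi}e^{-2i\theta}\,d\theta=0 .
\]
So $L=0$. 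In the case $\Omega_0=-1$, all $c_n$ with $n>0$ vanish, and the same autocorrelation argument applied to $\sum_n c_{-n}\overline{c_{-(n+2)}}$ kills the first line.

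\emph{Main obstacle.} I expect the main obstacle to be the key insight in Step 2: equality in the comass bound forces not just one-sided spectral support but also $|W|\equiv1$ almost everywhere. That unimodularity is what makes the shift-by-two autocorrelation vanish, because it is the $e^{2i\theta}$ Fourier coefficient of $|W|^2$. By comparison, the algebra in Step 1 is routine but error-prone.
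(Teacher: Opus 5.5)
Your proposal is correct and follows the paper's proof essentially step for step. The paper also obtains \eqref{eq:nonlinear-first-variation} by substituting \eqref{eq:tangent-complex-coordinates}, packaged as the two identities $\frac1{2i}(d\overline{z_j}\wedge dz_k)(e_1,e_2)=c_j\overline{c_k}-\overline{c_{-j}}c_{-k}$ and $\frac1{2i}(dz_j\wedge dz_k)(e_1,e_2)=c_{-j}\overline{c_k}-\overline{c_j}c_{-k}$; your two groups reduce to exactly these, and the paper then uses the same rigidity argument ($P=1$, $N=0$, $|W|=1$ a.e., so the frequency-two Fourier coefficient of $|W|^2$ vanishes), with the case $\Omega_0=-1$ handled by reversing the frame, which is valid because both $\Omega_0$ and $L$ are antisymmetric in $(e_1,e_2)$.
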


\begin{proof}
In the following calculation, $dz_n$ and $dh_n$ denote the differentials of the coordinate functions of $\Phi$ and $h\circ\Phi$, respectively. We have
$$L=\frac1{2i}\sum_{n\in\mathcal O}
\left(d\overline{h_n}\wedge dz_n+d\overline{z_n}\wedge dh_n\right)(e_1,e_2),$$
where
$$
\begin{aligned}
dh_n&=2\overline a\,y_n\,d\overline{z_1}+\overline a^{\,2}dz_{n+2},\\ d\overline{h_n}
&=2a\,\overline{y_n}\,dz_1+a^2d\overline{z_{n+2}}.
\end{aligned}
$$
By \eqref{eq:tangent-complex-coordinates},
$$
\begin{aligned}
\frac1{2i}(d\overline{z_j}\wedge dz_k)(e_1,e_2)
&=c_j\overline{c_k}-\overline{c_{-j}}c_{-k},\\ \frac1{2i}(dz_j\wedge dz_k)(e_1,e_2)
&=c_{-j}\overline{c_k}-\overline{c_j}c_{-k}.
\end{aligned}
$$
Substitution and combination of conjugate terms prove \eqref{eq:nonlinear-first-variation}. All the series converge absolutely by Cauchy--Schwarz, since the sequences involved belong to $\ell^2$.

Suppose first that $\Omega_0=1$. Set
$$P=\sum_{n\in\mathcal O}|c_n|^2, \qquad N=\sum_{n\in\mathcal O}|c_{-n}|^2.$$
Equations \eqref{eq:tangent-fourier-energy} and \eqref{eq:unperturbed-density} imply that $P=1$ and $N=0$. Thus $c_{-n}=0$ for every $n\in\mathcal O$, and
$$\frac1{2\pi}\int_0^{2\pi}|W(\theta)|^2\,d\theta=1.$$
Since $|W|\leq1$, we have $|W|=1$ almost everywhere. The second Fourier coefficient of $|W|^2$ therefore vanishes, and hence
$$\sum_{n\in\mathcal O}\overline{c_n}c_{n+2}=0.$$
Both sums in \eqref{eq:nonlinear-first-variation} now vanish, so $L=0$. If $\Omega_0=-1$, we replace $e_2$ by $-e_2$ and apply the preceding argument. Both $\Omega_0$ and $L$ change sign under this replacement, so $L=0$ in this case as well.
\end{proof}

The vanishing in Lemma \ref{lem:nonlinear-first-variation} has the
following quantitative form.

\begin{proposition}\label{prop:nonlinear-linear-estimate}
At every common differentiability point and for every ordered orthonormal basis, set
$$\delta=1-|\Omega_0|.$$
Then
\begin{equation}\label{eq:nonlinear-linear-estimate}
|L|\leq C_*\sqrt\delta+D_*\delta.
\end{equation}
\end{proposition}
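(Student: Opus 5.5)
By the sign symmetry used at the end of the proof of Lemma \ref{lem:nonlinear-first-variation} (replacing $e_2$ by $-e_2$ flips both $\Omega_0$ and $L$), I may assume $\Omega_0\geq0$, so $\Omega_0=P-N$ with $P=\sum_{n\in\mathcal O}|c_n|^2$ and $N=\sum_{n\in\mathcal O}|c_{-n}|^2$. Then \eqref{eq:tangent-fourier-energy} gives $P+N\leq1$ and $P-N=1-\delta$. From these I extract the two basic consequences
$$N\leq\frac{\delta}{2},\qquad 1-(P+N)\leq\delta .$$
So every coefficient with negative index, in particular $c_{-1}$, is $O(\sqrt\delta)$ in $\ell^2$, and the energy deficit is $O(\delta)$. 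I also use the pointwise bounds $r=|a|\leq 4/\pi$, $r^2+9Y^2\leq2$ and $r^2+25Z^2\leq2$ from Lemma \ref{lem:orientable-fourier-bounds}. I then estimate the two brackets in \eqref{eq:nonlinear-first-variation} separately.

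\emph{First bracket.} The negative-index part satisfies $|\sum_n c_{-n}\overline{c_{-(n+2)}}|\leq N\leq\delta/2$ by Cauchy--Schwarz, so with $|a|^2\leq16/\pi^2$ it contributes $O(\delta)$, which goes into $D_*\delta$. For the positive-index part I follow the proof of Lemma \ref{lem:nonlinear-first-variation} and look at the second Fourier coefficient of $|W|^2$. Write $|W|^2=1-g$ with $0\leq g\leq1$ and mean of $g$ at most $\delta$. Then the second Fourier coefficient of $g$ is at most its mean, so
$$\Bigl|\sum_{k\in2\Z+1}\overline{c_k}c_{k+2}\Bigr|\leq\delta .$$
The full sum equals $\sum_{n\in\mathcal O}\overline{c_n}c_{n+2}$ plus the terms with $k\leq-1$. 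These are the pure negative-index terms, which are $\leq N$, and the single cross term $\overline{c_{-1}}c_1$, which is bounded by $|c_{-1}|\leq\sqrt{\delta/2}$. Hence
$$\Bigl|\sum_{n\in\mathcal O}\overline{c_n}c_{n+2}\Bigr|\leq\sqrt{\delta/2}+\tfrac32\delta .$$
Multiplying by $2|a|^2\leq32/\pi^2$ gives the $\sqrt2\cdot16/\pi^2$ part of $C_*$ and part of $D_*$.

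\emph{Second bracket.} I bound $|\sum_n\overline{y_n}c_{-1}\overline{c_n}|\leq|c_{-1}|\,\|(y_n)\|\,\sqrt P$ and $|\sum_n\overline{y_n}\,\overline{c_1}c_{-n}|\leq|c_1|\,\|(y_n)\|\,\sqrt N$. Both are $O(\sqrt\delta)$ times $4r\cdot(\text{size of }y)$. To reach the stated constant, I plan to exploit that the $y_0=z_3$ coordinate pairs only with $c_1$ and $c_{-1}$. I will split off the $n=1$ term and control the tail using $Z$ (the $m\geq5$ coordinates) together with $r^2+25Z^2\leq2$. This is why the constant $\frac{8}{5\pi}\sqrt{2-16/\pi^2}$ appears: $rZ$ is maximized under $r\leq4/\pi$ and $25Z^2\leq2-r^2$. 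Finally, any leftover products of two $\sqrt\delta$ factors are absorbed into $D_*\delta$.

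The main obstacle is the bookkeeping of constants. Getting exactly $C_*$ and $D_*=32/\pi^2$, rather than just some bound $C\sqrt\delta+D\delta$, requires a careful split. Specifically, I need to decide which cross terms involve $c_{-1}$, which involve $\sqrt N$, and how $r$ and $Y$ (or $Z$) are jointly maximized. I would first derive a crude version with explicit constants, then tighten the second bracket by exploiting $|c_1|^2+|c_{-1}|^2\leq1$ and optimizing $rZ$ jointly.
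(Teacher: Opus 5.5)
Your architecture is the paper's: the sign reduction, $N\le\delta/2$, the second Fourier coefficient of $|W|^2$ split into $A=\sum\overline{c_n}c_{n+2}$, $B$ and $X=\overline{c_{-1}}c_1$, and then $Z$ with $r^2+25Z^2\le2$ for the second bracket. But the proposition is a claim about the specific constants $C_*$ and $D_*$, and your estimates do not reach them.

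\textbf{First bracket.} You bound the mean of $g=1-|W|^2$ by $\delta$. This gives $|A-B|\le|A+B+X|+2|B|+|X|\le\delta+2N+\sqrt N\le2\delta+\sqrt{\delta/2}$. After multiplying by $2r^2$, the $\delta$-term is $4r^2\delta\le\frac{64}{\pi^2}\delta=2D_*\delta$. Your $A$-part alone, $3r^2\delta$, already exceeds $D_*\delta$, so it is not ``part of $D_*$''. There is also no slack left for any $\delta$-order leftovers from the second bracket. The missing observation is that the mean of $g$ is exactly $1-P-N=\delta-2N$. The $-2N$ cancels $2|B|\le2N$ and gives $|A-B|\le\delta+\sqrt{\delta/2}$, i.e. a contribution $2r^2\delta+\sqrt2\,r^2\sqrt\delta$ with $2r^2\le D_*$. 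The loss is not cosmetic: with $2D_*$ in place of $D_*$, the quotient $B(1/25)/C(1/25)$ drops to about $5.398$, below $5.40154$.

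\textbf{Second bracket.} Your termwise bounds $|c_{-1}|\,Y\sqrt P+|c_1|\,Y\sqrt N$ can be as large as $2Y\sqrt{PN}$. That is twice what is needed, and it carries $Y$ rather than $Z$. Two exact facts are required.
\begin{enumerate}
\item With $v_n=c_{-1}\overline{c_n}-\overline{c_1}c_{-n}$, one has $v_1=c_{-1}\overline{c_1}-\overline{c_1}c_{-1}=0$ identically, so $y_1=z_3$ drops out. Merely splitting off the $n=1$ term and bounding it would add a $|z_3|$-term that is not in $C_*$.
\item On the remaining indices, with $a_\pm=|c_{\pm1}|$, one has $\|v\|\le a_-\sqrt{P-a_+^2}+a_+\sqrt{N-a_-^2}\le\sqrt{PN}\le\sqrt{\delta/2}$. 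The middle step is Cauchy--Schwarz in $\R^2$, pairing $a_-$ with $\sqrt{N-a_-^2}$ and $\sqrt{P-a_+^2}$ with $a_+$.
\end{enumerate}
The inequality $|c_1|^2+|c_{-1}|^2\le1$ that you propose yields only an $O(1)$ bound here. With both facts, the second bracket contributes at most $4rZ\sqrt{\delta/2}=\sqrt2\cdot2rZ\sqrt\delta$.

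\textbf{Origin of $C_*$.} It does not come from maximizing $rZ$. Under $25Z^2\le2-r^2$ the product $rZ$ is maximal at $r=1$, with value $1/5$. Bounding $r^2$ and $2rZ$ separately therefore gives the larger constant $\sqrt2(16/\pi^2+2/5)$. You must bound the sum: $r^2+2rZ\le f(r^2)$ with $f(t)=t+\frac25\sqrt{t(2-t)}$. Since $f$ is increasing on $[0,1+5/\sqrt{29}]$, which contains $16/\pi^2$, the maximum is at the endpoint $r=4/\pi$. This gives exactly $\sqrt2 f(16/\pi^2)=C_*$.
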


\begin{proof}
Reversing the frame if necessary, we may assume that $\Omega_0\geq0$. Write
$$P=\sum_{n\in\mathcal O}|c_n|^2, \qquad N=\sum_{n\in\mathcal O}|c_{-n}|^2. $$
Then $P+N\leq1$, $\Omega_0=P-N$, and
$$ \delta=1-P+N. $$
In particular, $N\leq\delta/2$.

Set
$$A=\sum_{n\in\mathcal O}\overline{c_n}c_{n+2}, \qquad
B=\sum_{n\in\mathcal O} c_{-n}\overline{c_{-(n+2)}}, \qquad
X=\overline{c_{-1}}c_1.$$
The second Fourier coefficient of $|W|^2$ is
$$A+B+X
=\sum_{k\in2\Z+1}\overline{c_k}c_{k+2}=\frac1{2\pi}\int_0^{2\pi}|W(\theta)|^2e^{-2i\theta}\,d\theta.$$
Since $1-|W|^2\geq0$, it follows that
$$|A+B+X|\leq\frac1{2\pi}\int_0^{2\pi}\left(1-|W(\theta)|^2\right)\,d\theta =1-P-N =\delta-2N.$$
Moreover, $|B|\leq N$ and $|X|\leq\sqrt{PN}\leq\sqrt N$. Therefore,
\begin{equation}\label{eq:nonlinear-autocorrelation-bound}
\begin{aligned}
|A-B| &\leq|A+B+X|+2|B|+|X|\\
&\leq\delta+\sqrt N
\leq\delta+\sqrt{\frac\delta2}.
\end{aligned}
\end{equation}

To estimate the second sum in \eqref{eq:nonlinear-first-variation}, set
$$v_n=c_{-1}\overline{c_n}-\overline{c_1}c_{-n}.$$
The first component vanishes, since $v_1=0$. Put $a_+=|c_1|$ and $a_-=|c_{-1}|$. The triangle inequality gives
$$\left(\sum_{n\in\mathcal O}|v_n|^2\right)^{1/2} \leq a_-\sqrt{P-a_+^2}+a_+\sqrt{N-a_-^2}
\leq\sqrt{PN}.$$
For completeness, the last inequality follows from
$$
\begin{aligned}
PN &-\left(a_-\sqrt{P-a_+^2}+a_+\sqrt{N-a_-^2}\right)^2\\
&=\left(\sqrt{(P-a_+^2)(N-a_-^2)}-a_+a_-\right)^2\geq0.
\end{aligned}
$$
Since $v_1=0$, the coordinate $y_1=z_3$ does not contribute. We may therefore use $Z$ from Lemma \ref{lem:orientable-fourier-bounds} to obtain
\begin{equation}\label{eq:nonlinear-cancellation-bound}
\begin{aligned}
\left|\sum_{n\in\mathcal O}\overline{y_n}v_n \right|
&\leq Z\sqrt{PN}\\
&\leq\frac Z2\sqrt{\delta(2-\delta)} \leq Z\sqrt{\frac\delta2}.
\end{aligned}
\end{equation}
Here we used
$$ 4PN=(P+N)^2-(P-N)^2 \leq1-(1-\delta)^2 =\delta(2-\delta).$$
Combining \eqref{eq:nonlinear-first-variation}, \eqref{eq:nonlinear-autocorrelation-bound}, and \eqref{eq:nonlinear-cancellation-bound}, we obtain
$$|L|\leq 2r^2\delta +\sqrt2\left(r^2+2rZ\right)\sqrt\delta,$$
where $r=|z_1|$. By \eqref{eq:orientable-coordinate-bounds},
$$2r^2\leq D_*$$
and
$$r^2+2rZ \leq r^2+\frac25r\sqrt{2-r^2}. $$
The function
$$ f(t)=t+\frac25\sqrt{t(2-t)}$$
is increasing on $[0,1+5/\sqrt{29}]$. Since $r^2\leq16/\pi^2<1+5/\sqrt{29}$, we have
$$\sqrt2\left(r^2+2rZ\right) \leq\sqrt2f\left(\frac{16}{\pi^2}\right) =C_*. $$
This proves \eqref{eq:nonlinear-linear-estimate}.
\end{proof}

\subsection{The quadratic term and the comass estimate}

\begin{lemma}\label{lem:nonlinear-quadratic-estimate}
At every common differentiability point and for every ordered orthonormal basis, we have
$$ |Q|\leq Q_*.$$
\end{lemma}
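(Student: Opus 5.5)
The plan is to bound $Q=\omega(Dh_z(p),Dh_z(q))$ by a crude operator-norm argument. We will \emph{not} try to exploit cancellations as in Proposition \ref{prop:nonlinear-linear-estimate}. We will only use the frame energy bound \eqref{eq:orientable-fourier-energy} and the coordinate bounds \eqref{eq:orientable-coordinate-bounds}. The target is
$$|Q|\leq r^2\left(r^2+4Y^2\right),$$
where $r=|z_1|$ and $Y^2=\sum_n|z_{n+2}|^2$ as in Lemma \ref{lem:orientable-fourier-bounds}.

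First, factor the derivative \eqref{eq:cubic-fourier-derivative} as $Dh_z(v)=\overline{a}\,T v$, where $a=z_1$ and
$$Tv=2\overline{v_1}\,Sz+\overline{a}\,Sv.$$
Multiplying both arguments of $\omega$ by the same complex scalar $\overline a$ multiplies $\omega$ by $|a|^2$. Hence $Q=r^2\,\omega(Tp,Tq)$.

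Next, observe that $T$ is real-linear and splits along the orthogonal decomposition $v=(v_1,v')$, where $v'=(v_3,v_5,\ldots)$. The shift $S$ discards $v_1$, so $Sv$ depends only on $v'$. The first summand $v_1\mapsto 2\overline{v_1}Sz$ has norm $2\|Sz\|_H=2Y$. The second summand $v'\mapsto\overline a\,Sv'$ has norm $r$, since $S$ is an isometry from the span of $e_3,e_5,\ldots$ onto $H$. For a map defined on an orthogonal direct sum, Cauchy--Schwarz gives
$$\|T\|\leq\sqrt{4Y^2+r^2}.$$
This holds even though the two images need not be orthogonal.

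Then combine this with the energy bound. We have $|\omega(u,w)|\leq\|u\|\|w\|\leq\frac12(\|u\|^2+\|w\|^2)$. By \eqref{eq:orientable-fourier-energy}, $\|p\|^2+\|q\|^2\leq2$ at the common differentiability points. Therefore
$$|\omega(Tp,Tq)|\leq\tfrac12\|T\|^2\left(\|p\|^2+\|q\|^2\right)\leq r^2+4Y^2,$$
and so $|Q|\leq r^2(r^2+4Y^2)$.

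Finally, insert \eqref{eq:orientable-coordinate-bounds}. The bound $9Y^2\leq2-r^2$ gives
$$|Q|\leq r^4+\tfrac49r^2(2-r^2)=\tfrac59r^4+\tfrac89r^2.$$
This is increasing in $r^2$. With $r^2\leq16/\pi^2$ it yields
$$\tfrac59\cdot\tfrac{256}{\pi^4}+\tfrac89\cdot\tfrac{16}{\pi^2}=\tfrac{1280}{9\pi^4}+\tfrac{128}{9\pi^2}=Q_*.$$

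The main point to check is the norm estimate for $T$. Specifically, we must confirm that $Sv$ genuinely does not involve $v_1$, and that the real-linear (conjugating) piece $v_1\mapsto2\overline{v_1}Sz$ has operator norm $2Y$. Everything else is routine, and the matching of constants with $Q_*$ indicates this is the intended route.
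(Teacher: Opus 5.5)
Your proof is correct and follows essentially the same route as the paper. Both bound the operator norm of $Dh_z$ by $\sqrt{4r^2Y^2+r^4}$ via Cauchy--Schwarz over the splitting $|v_1|^2+\|Sv\|_H^2=\|v\|_H^2$, then combine this with the frame energy bound and \eqref{eq:orientable-coordinate-bounds}; factoring out $\overline{a}$ and using $\omega(\overline{a}u,\overline{a}w)=|a|^2\omega(u,w)$ is only a cosmetic repackaging of the same estimate.
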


\begin{proof}
Let $z=\Phi(x)$ and retain the notation $r=|z_1|$ and $Y=\|Sz\|_H$. For every $v\in H$, \eqref{eq:cubic-fourier-derivative} gives
$$\|Dh_z(v)\|_H \leq2rY|v_1|+r^2\|Sv\|_H.$$
Since
$$|v_1|^2+\|Sv\|_H^2=\|v\|_H^2,$$
the Cauchy-Schwarz inequality gives
$$\|Dh_z(v)\|_H \leq\sqrt{4r^2Y^2+r^4}\,\|v\|_H.$$
Using \eqref{eq:orientable-coordinate-bounds}, we obtain
$$ 4r^2Y^2+r^4 \leq\frac89r^2+\frac59r^4 \leq\frac{128}{9\pi^2}+\frac{1280}{9\pi^4} =Q_*.$$
If $p=d\Phi(e_1)$ and $q=d\Phi(e_2)$, then
$$ Q=\omega(Dh_z(p),Dh_z(q)).$$
Consequently, \eqref{eq:orientable-fourier-energy} gives
$$
\begin{aligned}
|Q|&\leq Q_*\|p\|_H\|q\|_H\\
&\leq\frac{Q_*}{2}\left(\|p\|_H^2+\|q\|_H^2\right)\leq Q_*.
\end{aligned}
$$
\end{proof}

\begin{proposition}\label{prop:nonlinear-comass}
For $0\leq\lambda<\pi^2/32$, we have
\begin{equation}\label{eq:nonlinear-comass}
\|\Omega_\lambda\|_{\mathrm{comass}}\leq C(\lambda):=1+\lambda^2\left[Q_*+\frac{C_*^2}{4(1-D_*\lambda)}\right].
\end{equation}
\end{proposition}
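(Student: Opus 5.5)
The plan is to work pointwise. Fix a common differentiability point $x\in M^\circ$ and an ordered orthonormal frame $(e_1,e_2)$ of $T_xM$. Reversing the frame if necessary, we may assume $\Omega_\lambda(e_1,e_2)\geq0$, because the expansion \eqref{eq:nonlinear-form-expansion} changes sign as a whole under frame reversal. Put $\delta=1-|\Omega_0|\in[0,1]$; this lies in $[0,1]$ since $|\Omega_0|\leq1$ by \eqref{eq:unperturbed-density} and \eqref{eq:tangent-fourier-energy}. Then \eqref{eq:nonlinear-form-expansion}, Proposition \ref{prop:nonlinear-linear-estimate} and Lemma \ref{lem:nonlinear-quadratic-estimate} give
\[
|\Omega_\lambda|\leq 1-\delta+\lambda\bigl(C_*\sqrt\delta+D_*\delta\bigr)+\lambda^2Q_* .
\]
Taking the essential supremum over frames then reduces the claim to a one-variable bound in $\delta$.

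Set $t=\sqrt\delta\in[0,1]$. The right-hand side becomes
\[
1+\lambda^2Q_*+\lambda C_*t-(1-D_*\lambda)t^2 .
\]
The hypothesis $\lambda<\pi^2/32=1/D_*$ gives $1-D_*\lambda>0$, so this is a concave quadratic in $t$. Its maximum over $t\in\R$ is attained at $t_0=\lambda C_*/(2(1-D_*\lambda))$ and equals
\[
1+\lambda^2Q_*+\frac{\lambda^2C_*^2}{4(1-D_*\lambda)}=C(\lambda).
\]
Maximizing over all real $t$ rather than $[0,1]$ only enlarges the bound, so $|\Omega_\lambda(e_1,e_2)|\leq C(\lambda)$ at almost every point and for every frame. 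This is exactly the comass bound \eqref{eq:nonlinear-comass}.

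The only delicate point is measure-theoretic: the estimate must hold simultaneously for all frames at almost every point. The full-measure set from Lemmas \ref{lemma 2.4} and \ref{lem:orientable-fourier-bounds} does not depend on the frame, and Proposition \ref{prop:nonlinear-linear-estimate} and Lemma \ref{lem:nonlinear-quadratic-estimate} are stated for every ordered orthonormal basis. Hence the supremum over frames is handled pointwise, and the essential supremum over $M$ is at most $C(\lambda)$. There is no real obstacle beyond checking the sign conventions in the frame reversal.
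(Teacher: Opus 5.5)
Your proposal is correct and is essentially the paper's proof: bound $|\Omega_\lambda|$ by $1-\delta+\lambda\bigl(C_*\sqrt\delta+D_*\delta\bigr)+Q_*\lambda^2$ using the expansion, Proposition~\ref{prop:nonlinear-linear-estimate} and Lemma~\ref{lem:nonlinear-quadratic-estimate}, then complete the square in $\sqrt\delta$ using $1-D_*\lambda>0$. Your initial frame reversal to make $\Omega_\lambda\geq0$ is harmless but unnecessary, since the triangle inequality already bounds the absolute value.
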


\begin{proof}
At a common differentiability point, fix an ordered orthonormal basis and set $\delta=1-|\Omega_0|\in[0,1]$. Proposition \ref{prop:nonlinear-linear-estimate} and Lemma \ref{lem:nonlinear-quadratic-estimate} give
$$
\begin{aligned}
|\Omega_\lambda|&\leq1-\delta+\lambda\left(C_*\sqrt\delta+D_*\delta\right)+Q_*\lambda^2\\
&=1-(1-D_*\lambda)\delta+C_*\lambda\sqrt\delta+Q_*\lambda^2.
\end{aligned}
$$
Since $1-D_*\lambda>0$, completing the square gives
$$-(1-D_*\lambda)\delta+C_*\lambda\sqrt\delta\leq\frac{C_*^2\lambda^2}{4(1-D_*\lambda)}.$$
Taking the essential supremum over all orthonormal frames proves
\eqref{eq:nonlinear-comass}.
\end{proof}

\begin{proof}[Proof of Theorem \ref{thm:nonlinear-formula}]
Propositions \ref{prop:nonlinear-boundary-action} and
\ref{prop:nonlinear-comass} give
$$
B(\lambda)
=\int_M\Omega_\lambda
\leq\int_M|\Omega_\lambda|\,dA
\leq C(\lambda)\operatorname{Area}(M).
$$
Since $C(\lambda)>0$, this proves \eqref{eq:nonlinear-master}. Taking $\lambda=1/25$ and substituting the constants from \eqref{eq:nonlinear-constants}, a direct interval computation gives
$$
5.401544
<
\frac{B(1/25)}{C(1/25)}
<
5.401545.
$$
Consequently,
$$
\operatorname{Area}(M)>5.40154.
$$
This proves Theorem \ref{thm:main orientable}.
\end{proof}

\begin{remark}
The main purpose of this section is to show that a nonlinear resonant
perturbation gives a strict improvement of the universal area bound for
orientable fillings. We have favored explicit estimates over optimizing
all the constants, and the resulting bound is not expected to be sharp.
It remains open whether additional resonant perturbations can produce a
substantially stronger estimate or even approach the conjectured bound
$2\pi$.
\end{remark}

\section*{Conflict of Interest}
The authors state that there is no conflict of interest.

\section*{Data Availability Statement}
Data sharing is not applicable to this article as no datasets were generated or analyzed during the current study.

\bibliographystyle{alpha}
\bibliography{references}

\end{document}